\newcommand{\RR}{{\mathbb{R}}}
\newcommand{\CC}{{\mathbb{C}}}
\theoremstyle{plain}
\newtheorem{theorem}{Theorem}[section]
\newtheorem{lemma}[theorem]{Lemma}
\newtheorem{claim}[theorem]{Claim}
\author{Joel Hass and Patrice Koehl} 
\begin{document}
\title{A Metric for genus-zero surfaces}

\begin{abstract}
We present a new method  to compare the shapes of genus-zero surfaces. 
We introduce a measure of mutual stretching, the {\em symmetric distortion energy},
and establish the existence of  a conformal diffeomorphism between any two genus-zero surfaces that
minimizes this energy.
We then prove that the energies of the minimizing diffeomorphisms
give a metric on the space of genus-zero Riemannian surfaces. 
This metric and the corresponding optimal diffeomorphisms are shown to
have properties that are highly desirable for applications. 
\end{abstract}

\date{\today}

\maketitle

\section{Introduction}
The problem of comparing the  shapes of surface arises in many fields, including facial recognition, image processing, brain cortex analysis, protein structure analysis and computer vision.
It is referred to by names such as {\em surface registration, surface warping, best fit, shape analysis} and {\em geometric morphometrics}.
In this paper we introduce a new method to compare the shapes of two genus-zero surfaces. 
The method is based on a sequence of two energy minimizations, first minimizing the Dirichlet energy to produce a conformal map and then
minimizing a symmetric distortion energy,  defined in Section~\ref{align}.
It  produces a metric $d_{sd}$ on the space of piecewise-smooth surfaces genus-zero Riemannian surfaces, which we call the {\em symmetric distortion} metric.
In addition to giving a distance between any pair of genus-zero surfaces,
the method also produces an optimal correspondence  between them, a diffeomorphism whose
symmetric distortion energy defines the symmetric distortion distance.

A common approach to shape comparison of surfaces in $\RR^3$
 is to search for a Euclidean motion moving one surface close to the second, and to then measure in some way
the setwise difference between the two repositioned surfaces. 
Such approaches are {\em extrinsic}, as they consider not just the
two-dimensional geometry of the surface, but also the particular geometric embedding of the shape in
space. In extrinsic geometry, a hand in different configurations represents very different geometric shapes.
From the {\em intrinsic} point of view, which we use, a hand in different poses represents close to identical geometries. 
The intrinsic approach  has significant advantages when comparing surfaces
that can be flexible.

Our method of comparing two shapes involves finding an optimal diffeomorphism from one  to the other, a map that minimizes a measure of surface distortion.   This is often not the case  
in methods that compare surfaces by creating vectors of shape signatures based on features such as diameter, curvatures,
spectral properties, and spherical harmonics.
The existence of an explicit correspondence realizing the minimal distance is
very useful in many applications, and gives an advantage over methods, both extrinsic and intrinsic, that 
give distance measures without producing surface correspondences.

A key property of the measure of shape difference that we introduce is that it
gives a mathematical metric on the space of shapes of genus-zero surfaces.
A {\em metric} on a set $\mathcal X$ is a distance function $d:{\mathcal X} \times {\mathcal X} \to \RR$ that satisfies three properties:
\begin{enumerate}
\item   $d(S,T) \ge 0$, and $d(S,T) = 0$ if and only if $S = T$.
\item $d(S,T) = d(T,S)$
\item $d(S,W)  \le  d(S,T)  + d(T,W)$
\end{enumerate}
These properties are highly desirable for a shape comparison function. They imply that the
distance between shapes is stable and not overly sensitive to noise and measurement error.  If $S'$ is close to $S$ and
$T'$ is close to $T$, then condition (3) implies that $d(S,T) \approx d(S',T')$.
We will introduce a mathematical metric on the space 
$\mathcal S$ of genus-zero surfaces with piecewise-smooth Riemannian metrics, with two surfaces considered
equivalent if they are isometric.

Our method has many additional useful features.
It gives a conformal diffeomorphism from one surface to the other, useful for applications such as texturing. 
It is well suited to representation of smooth surfaces by triangular meshes. The computed correspondence is robust under a change of mesh, either from a perturbation of the location of vertices or
from using a combinatorially distinct mesh.
It  is intrinsic, so that the correspondence between two surfaces does not depend on how they are embedded in $\RR^3$, but only on their Riemannian metrics, and
 is therefore well suited for comparing flexible surfaces that arise in the study of  non-rigid objects.
The method  applies to immersed surfaces (surfaces with self-intersections) and to surfaces in arbitrary manifolds. Note that arbitrary Riemannian surfaces may
not be realizable as subsets of $\RR^3$. Furthermore, surfaces whose meshes have intersecting or overlapping triangles fit just
as well into the framework of our algorithm as embedded surfaces. 
Pairs of intersecting triangles are common in meshes constructed from point clouds, and
are problematic for some shape comparison approaches.
Finally, we note that our method can be Implemented to be completely automated, and does not rely
on any preliminary labeling of landmark or feature points. 
This allows for avoidance of errors and costs due to variability of human input.

Applications of our shape comparison method include:
\begin{enumerate} \itemsep 2pt
\item  Shape retrieval, or finding nearest fits in an atlas, or database of shapes,
\item  Geometric clustering,
\item  Alignment of surfaces with different conformations but similar surface geometry. For example, comparing scans of  faces that exhibit different facial expressions,
\item  Alignment of images of one object taken at different times, to measure change over time, and to locate subregions where changes have occurred,
\item Statistically sampling surfaces, and averaging  to find typical surfaces or random surfaces,
\item Determining the suitability of a conformal parametrization of a surface.  Computation of a very large dilation can indicate problems in conformal parametrization, resulting in a mesh that does not closely align to a modeled underlying surface.
\item Transferring a single common mesh to a collection of genus-zero surfaces. This in turn can be used to interpolate between collections of surfaces which
are initially described with distinct meshes, giving an average shape for a collection of differently meshed surfaces,
\item Creating a conformal map to use as an initial value or seed in shape correspondence methods that allow for non-conformal correspondences but depend
on a good initial correspondence,
\item Detecting symmetry.  If a surface has reflectional symmetry then it and a reflected copy have small distance. Similarly a  diffeomorphism whose source and image have small distance and that is not close to the identity indicates 
existence of a symmetry,
\item Coarsening a mesh while retaining surface geometry. The computation of $d_{sd}$ is minimally
 affected by subdivision or coarsening of a mesh, so coarsening a mesh will preserve $d_{sd}$ 
 as long as the coarsened mesh is geometrically close to the  original mesh.
\end{enumerate}
 
We consider here the case where 
each of the two compared shapes is a surface of genus zero,  or a topological sphere.  
The restriction to genus zero is appropriate for a wide variety of natural
surface comparison problems, including facial recognition,  alignment and comparison of brain cortices,  comparing protein surfaces, and geometric identification and comparison of
objects such as bones and teeth. Note that when a comparison is sought between two disk type surfaces, each with
a single boundary curve,
this problem can be transformed into a comparison of two spheres. 
The transformation can be accomplished, for example, 
by gluing a flat disk with appropriate boundary length  onto the boundary of each of the pair of
initial surfaces, turning them into genus-zero surfaces.
The same idea allows comparison of annuli or more general disks with holes.
Extensions to surfaces of higher genus can be carried out by
 considering conformal classes of flat and hyperbolic geometries, or by searching for
canonical surgeries to reduce a surface to genus zero. These will be explored elsewhere. 

Our approach is based on successively minimizing two energies defined on maps between surfaces.  We first minimize the Dirichlet energy among all maps between the surfaces, yielding a map which is harmonic.  
For genus zero surfaces, the harmonic maps exactly coincide with the
conformal maps, and this step reduces the maps to be considered from the
vast space of all diffeomorphisms to the much smaller, but still large, six-dimensional space of conformal maps. 
We then minimize again, this time using
a symmetrized energy function introduced in this paper that we call the {\em symmetric distortion energy}. 
Minimizing this energy amounts to picking an appropriate Mobius transformation, as indicated in Figure~\ref{fig:overview}.
The symmetric distortion energy gives a measure of the distance of a conformal map from an isometry.
This energy is both conceptually natural and efficiently computable, and achieves
good results in experimental tests. 
We show in Section~\ref{computations} that the symmetric distortion energy
and the metric on surfaces it induces behave well as measures of shape similarity. In a related paper we
apply the symmetric distortion distance to study similarities of shape in biological objects such as the surfaces of 
bones \cite{KoehlHassScience}.  Results in that paper  indicate that this distance is remarkably effective in distinguishing and grouping biological shapes.  

\begin{figure}[htbp] 
\centering
\includegraphics[width=4in]{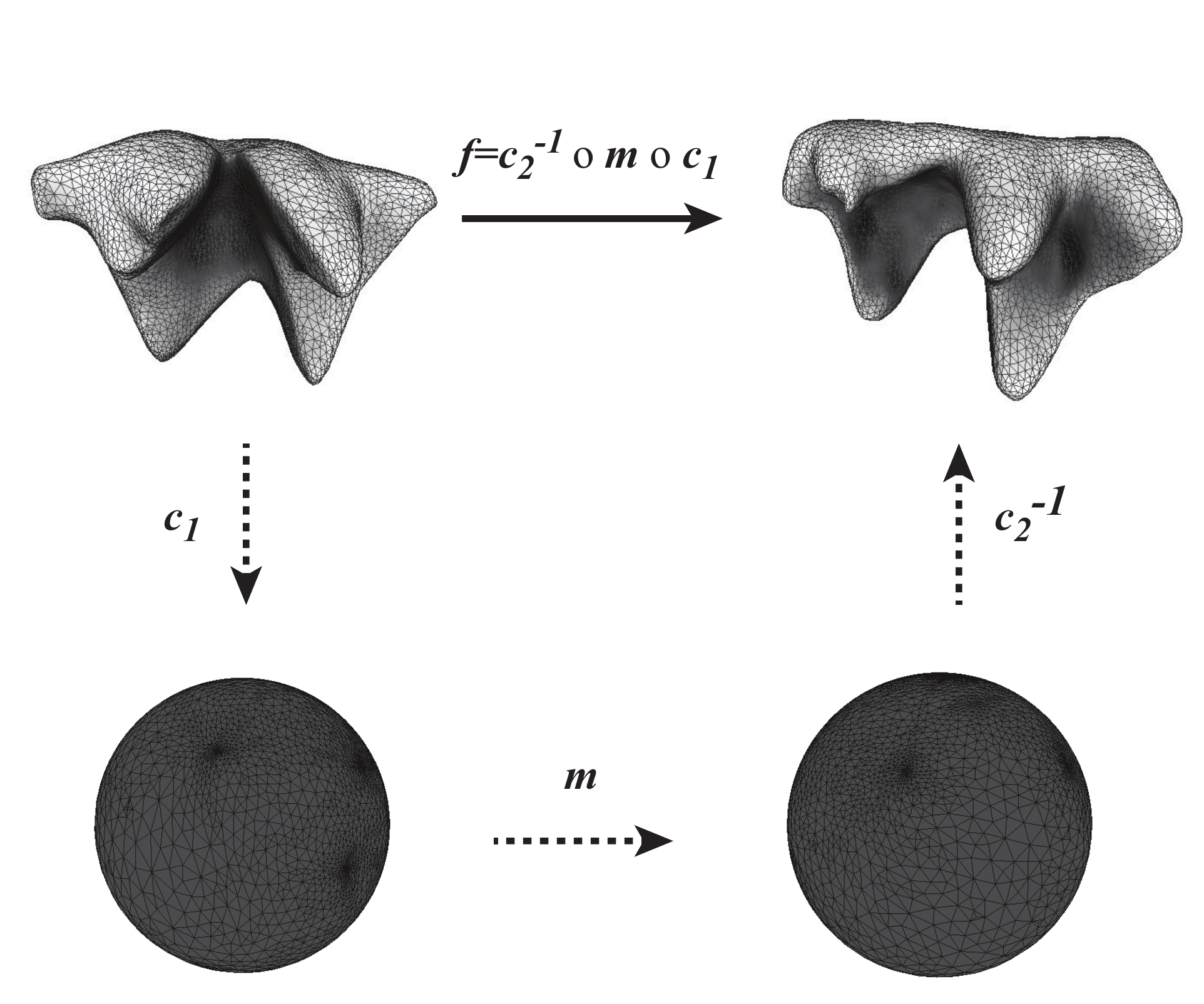} 
\caption{In this example the symmetric distortion distance between two teeth is calculated.
The first surface $ F_1 $ is scanned from the molar of a flying lemur while the second $ F_2 $ comes from a tree shrew.  
A conformal map minimizing  symmetric distortion energy is computed in two main stages. 
In the first stage, conformal maps to the round sphere,
$c_1: F_1 \to S^2$ and $c_2: F_2 \to S^2$, are computed.  In the second step, a Mobius transformation
 $m: S^2 \to S^2$ is computed that minimizes the symmetric distortion energy of the 
 composite map $ f = {c_2}^{-1 }  \circ m  \circ c_1 :  F_1 \to F_2$. The energy of $f$ gives the  symmetric distortion distance between the surfaces.}
\label{fig:overview}
\end{figure}

\subsection{Prior work}

Conformal maps from surfaces to the plane have become an important tool to visualize and to flatten surfaces,
in particular for surfaces that are topological disks, but also for spheres and higher genus surfaces. By mapping a 
surface region to the plane while preserving angles, these methods
allow for consistent visualization of locations on highly folded surfaces, and for graphical techniques
such as texturing. 

Pinkall and Polthier described a mid-edge method of computing discrete conformal maps and applied it to the computation of
discrete minimal surfaces \cite{PinkallPolthier}. Bobenko, Pinkall and Springborn gave an approach to computing discrete conformal maps based on an energy minimization technique \cite{BobenkoPinkallSpringborn}.
Thurston suggested that  discrete approximations of conformal maps could be obtained by circle packings.
This was carried out in work of Hurdal \cite{Hurdal} and Stephenson\cite{Stephenson}.  
Rodin and Sullivan, and He and Schramm established that the limits of discrete maps
obtained by circle packings converge to smooth conformal maps \cite{RodinSullivan:87}, \cite{HeSchramm:96}.
Haker et al. developed a method based on finite elements to compute discrete conformal maps  \cite{Haker}.
Gu and Yau computed discrete conformal parametrizations for surface matching \cite{GuYau:2002}. Jin, Wang, Yau and Gu used a stretching energy to create optimal parametrizations of surfaces \cite{Jin}. 
The Gromov-Hausdorff distance has also been used to develop shape comparison methods, as in \cite{BronsteinBronsteinKimmel, EladKimmel, Memoli}.

Recently a series of shape comparison methods introduced that are based on optimal
transport.
Lipman and Funkhouser developed a method to find an optimal conformal surface correspondence based on a voting scheme that weighs  transportation costs \cite{LipmanFunkhouser}.
Boyer et al. introduced several innovative methods to compare shapes 
based on minimizing a cost based on optimal transport \cite{Boyer}.   
They tested their methods on a collection of scanned biological objects, consisting of teeth, radius bones, and metatarsal bones from a variety of primates. 
This data was made available and we have used it to test our method and to compare its effectiveness to 
the methods described in  \cite{Boyer}, and to the expert observer data that they provided.

Earlier work of the two authors used related energies associated to conformal 
diffeomorphisms to compare shapes of brain cortices
and of protein surfaces \cite{KoehlHass:14}. This earlier work has been improved and further developed here.
In particular,  the optimal diffeomorphism produced by the symmetric distortion energy is proven to have values that 
give a metric on the space of shapes, a highly desirable feature not present in the previously studied energies. 
In the discrete setting, the approach given here has been improved to give
mesh independent surface comparisons. In contrast, 
the method given in \cite{KoehlHass:14} required combinatorially identical meshes 
before it could provide  a consistent measure of distances between shapes, 
a requirement that restricted the scope of applications.
These limitations have been overcome in the current work.

\section{Aligning smooth surfaces} \label{align}

In this section we develop our method  in the context of smooth surfaces and mappings.
This gives the underlying theory for the subsequent implementations of 
computational algorithms on triangulated or meshed surfaces.

A Riemannian surface  is a smooth  2-dimensional manifold equipped with a Riemannian metric,
a smoothly varying inner product on the tangent space of the surface.  
An {\em isometry} between two Riemannian surfaces is a map under which the Riemannian metrics correspond. 
In particular, an isometry preserves angles and  distances along the surface.
Not all angle preserving maps
are isometries.  Similarities of the plane, which stretch all distances uniformly, give an example of a non-isometric angle preserving map.
The maps that preserve angles at each point are called {\em conformal}.  

Some metric distortion is necessarily present in any construction of an alignment from a surface to another when no isometry exists.
 A measure of this distortion is given by the total stretching energy of the first surface as it is deformed
 over the second. This stretching can be measured by the {\em Dirichlet energy} $E_D(f)$ of a map $f : F_1 \to F_2$,  defined by the integral
\begin{eqnarray*}
E_D(f) =\frac{1}{2}  \int_{F_1}||df||^2 ~ dA   
\label{energy}
\end{eqnarray*}

The  maps that minimize Dirichlet energy between two surfaces are called {\em harmonic maps}. In two dimensions, 
the class of harmonic diffeomorphisms of spheres, the class of conformal diffeomorphisms, 
and  the class of holomorphic (complex differentiable) diffeomorphisms, all coincide.  
We  focus here on the angle-preserving property of conformal maps.  
 
A deep result, the {\em Uniformization Theorem}, states that a conformal diffeomorphism always exists between two smooth genus-zero Riemannian surfaces $F_1$ and $F_2$  \cite{Bers}.
However such conformal maps are not unique. 
Each conformal diffeomorphism $f : F_1 \to F_2$ is part of a 6-dimensional family. 
To understand this family we consider the case where $F_1$ is the round, radius-one 2-sphere $S^2 \subset \RR^3$. 
The space of conformal diffeomorphisms from $S^2$ to itself forms the six-dimensional group  $PSL(2,\CC)$, called the {\em Mobius} or {\em linear-fractional} transformations.
Any conformal map $f:S^2  \to F_2$ can be precomposed with a conformal Mobius transformation 
$\phi :S^2  \to S^2 $ to give a new conformal map $f \circ \phi : S^2  \to F_2$,
and this construction gives the entire six-dimensional family of  conformal maps from $S^2 $ to $F_2$
 
A conformal map $f:F_1 \to F_2$ stretches the metric of $F_1$  at a point $p \in F_1$ uniformly in all directions.
A conformal diffeomorphism then defines a real valued function $ {\lambda_f}: F_1 \to \RR^+$ that measures the
 stretching of vectors at each point.  
The function $ {\lambda_f}$ is called the  {\em dilation},
and is defined by
$$
f^*(g_2) =  {\lambda_f}^2 g_1,
$$
 where $g_1, g_2$ are the Riemannian metrics on $F_1, F_2$ respectively, and  $f^*(g_2)$ is the metric on
 $F_1$ obtained by pulling back the metric on $F_2$.
 This formula means that for $x \in F_1$, 
  vectors $v_1, v_2 \in T_xF_1$ in the tangent space of $F_1$ at $x$,
  and  $f_*(v_i)$ the image of $v_i$  in the tangent space of $F_2$ at $f(x)$ under the derivative of $f$, 
  we have 
 $$
 g_2 (f_*(v_1),f_*(v_2))_{f(x)} =   {\lambda_f}^2(x) g_1(v_1,v_2)_x .
$$
The conformal factor $ {\lambda_f}^2(x)$ measures the pointwise expansion or
 contraction  of area at $x$. 
 
A common measure of the global distortion of  a map $f : F_1 \to F_2$  is given by the {\em Dirichlet energy} of  $f $.
This energy is conformally invariant, meaning that is preserved by pre-composition with a conformal map.
The minimal value of the Dirichlet energy over all diffeomorphisms between a pair of smooth genus-zero surfaces is 
 obtained when $f$ is a conformal diffeomorphism, and in that case the energy is equal to the area of $F_2$.  
 For a conformal map with Jacobian determinant $|\mbox{Jac}(f)|$ we have  $||df||^2 = 2   {\lambda_f}^2 = 2 \|\mbox{Jac}(f)|$, 
 \begin{eqnarray*}
  E_D(f) =\frac{1}{2}  \int_{F_1} <df,df>  ~dA  ~~~ \ge~~~   \int_{F_1}|\mbox{Jac}(f)|~ dA = \mbox{Area}(F_2). 
\end{eqnarray*}
If we take $e_{1},e_{2}$ to be an orthonormal frame in a neighborhood of a point 
then the formula for the integrand on that neighborhood simplifies to
\[
\frac{1}{2} (||f_{\ast}(e_{1})||^{2}+||f_{\ast}(e_{2})||^{2})  \ge  ||f_{\ast}(e_{1})|| ||f_{\ast}(e_{2})|| \ge |\mbox{Jac}(f)|
\]
Equality holds  precisely when $f$  is a conformal diffeomorphism. 
Since the Dirichlet energies of any two conformal maps are equal, we introduce 
an additional, secondary energy to measure  the distance of a conformal map from an isometry.  This energy
emerges naturally from the following considerations.
If a  conformal map $f$ has constant dilation $ {\lambda_f} =1$, then $df$ preserves both lengths and angles at each point,
and thus $f$ is an isometry. So $| {\lambda_f} -1|$ indicates the pointwise deviation of a conformal map from an isometry. 
 This leads us to the following integral that globally measures this deviation,
 \begin{eqnarray*}
E_{el}(f) =   \int_{F_1} ( {\lambda_f}-1)^2 ~ dA.
\end{eqnarray*} 
We call this the {\em elastic energy} of the map $f$.
 
The elastic energy of a  conformal map is closely related to the average stretching of $\lambda_f$, given by
\begin{eqnarray*}
E_1(f) = \int_{F_1}  {\lambda_f}  ~ dA.
\end{eqnarray*}

\begin{lemma}  \label{E1}
A conformal map $f$ minimizes $E_{el}(f) $ among all conformal maps $f : F_1 \to F_2 $ 
if and only if $f$ maximizes  $E_1$ among all such  maps.
\end{lemma}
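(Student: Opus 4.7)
The plan is to expand the square $(\lambda_f-1)^2 = \lambda_f^2 - 2\lambda_f + 1$ under the integral defining $E_{el}$, and then identify the two terms that do not depend on the choice of conformal map $f$. Concretely, I would write
\begin{equation*}
E_{el}(f) = \int_{F_1} \lambda_f^2 \, dA \;-\; 2\int_{F_1} \lambda_f \, dA \;+\; \int_{F_1} 1 \, dA.
\end{equation*}
The middle term is $2E_1(f)$ by definition, and the last term is $\mathrm{Area}(F_1)$, independent of $f$.

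The key observation is that the first term also does not depend on $f$, provided $f$ is a conformal diffeomorphism. Indeed, as noted in the paragraph preceding the lemma, for a conformal map the Jacobian determinant equals $\lambda_f^2$, so the change of variables formula gives
\begin{equation*}
\int_{F_1} \lambda_f^2 \, dA \;=\; \int_{F_1} |\mathrm{Jac}(f)| \, dA \;=\; \mathrm{Area}(F_2).
\end{equation*}
Combining these identities yields
\begin{equation*}
E_{el}(f) \;=\; \mathrm{Area}(F_1) + \mathrm{Area}(F_2) \;-\; 2\, E_1(f),
\end{equation*}
and since the area terms are intrinsic constants of $F_1$ and $F_2$, the functionals $E_{el}$ and $-2E_1$ differ by a constant on the space of conformal diffeomorphisms. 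Minimizers of $E_{el}$ are therefore precisely maximizers of $E_1$, which is the claim.

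There is essentially no obstacle here: the only substantive point is recognizing that $\lambda_f^2$ is the area-distortion factor of a conformal map, so that $\int \lambda_f^2\, dA$ collapses to $\mathrm{Area}(F_2)$. This relies on $f$ being a \emph{diffeomorphism} (so that the change-of-variables is valid without multiplicity corrections); the lemma is stated for conformal maps, and in the genus-zero setting these are diffeomorphisms by the Uniformization Theorem and the discussion preceding the lemma.
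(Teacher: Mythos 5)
Your proof is correct and follows essentially the same route as the paper: expand $(\lambda_f-1)^2$, observe that $\int_{F_1}\lambda_f^2\,dA=\mathrm{Area}(F_2)$ for a conformal diffeomorphism, and conclude that $E_{el}(f)$ and $-2E_1(f)$ differ by the constant $\mathrm{Area}(F_1)+\mathrm{Area}(F_2)$. The only cosmetic difference is that the paper phrases the middle step via the Dirichlet energy, writing $\int_{F_1}\lambda_f^2\,dA=E_D(f)=\mathrm{Area}(F_2)$, whereas you invoke the change-of-variables formula directly; these are the same identity.
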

\begin{proof}
For a conformal map $f : F_1 \to F_2$, we have  $||df||^2 = <df,df> = 2  {\lambda_f}^2 = 2|\mbox{Jac}(f)|$
and $ E_D(f)  =  \mbox{Area}(F_2)$. Then
 \begin{eqnarray*}
E_{el}(f) &=&\int_{F_1} (\lambda_f-1)^2 ~ dA_1 \\
&=&  \int_{F_1}  ({\lambda_f}^2 -2\lambda_f +1) ~ dA_1 \\ 
&=&   \int_{F_1}  {\lambda_f}^2  ~ dA_1 + \int_{F_1}  1  ~ dA_1  -2 \int_{F_1}  \lambda_f  ~ dA_1  \\ 
&=&   E_D(f) +  \mbox{Area}(F_1)  -2  E_1(f) \\ 
&=&   \mbox{Area}(F_2) + \mbox{Area}(F_1)  -2  E_1(f).
\end{eqnarray*}
The first two terms do not depend on the choice of conformal map $f$.
Thus $E_{el}$ is minimized when $E_1$ is maximized.
\end{proof}

A somewhat similar functional was introduced by Jin, Wang, Yau and Gu \cite{Jin}. 
 The integrand in their work has the form $( {\lambda_f}^2-1)^2$ rather than the $( {\lambda_f}-1)^2$ used in
 our definition of elastic energy.
 They called this a ``uniformity energy'' on conformal maps and 
applied it to find optimal parametrizations of surfaces. 
However as a result of the fourth order term, Lemma~\ref{E1} does not apply to their energy. 

The energy $E_{el}$ is bounded above in the space of conformal maps from $F_1$ to   $F_2$  by $ A(F_2) +A(F_1) $.
The definition of elastic energy can be extended to non-conformal maps by taking
$$
E_{el}(f) =   E_D(f) +\mbox{Area}(F_1)  - \int_{F_1}\mbox{tr}(df )~ dA_1.
$$
We restrict attention to conformal maps in this paper.

A drawback of the elastic energy as a measure of shape distortion is its lack of symmetry. 
The elastic energy of $f$ and $f^{-1}$  may not be equal.  Nor does the 
optimality of $f$ imply the same for  $f^{-1}$.
We now define a symmetrized energy that corrects this shortcoming.\\ \\
{\bf Definition.} The {\em Symmetric Distortion Energy} of a conformal diffeomorphism $f : F_1 \to F_2$ with dilation function
$\lambda_f : F_1 \to \RR$ is

\begin{eqnarray*}
E_{sd}(f) & = &   \sqrt{E_{el} (f)} +    \sqrt{ E_{el} (f^{-1} ) }   \\
 & = &   \sqrt{  \int_{F_1} (\lambda_f (z) -1)^2 ~ dA_1 } +    \sqrt{   \int_{F_2} (  \lambda_{f^{-1}}(z)-1)^2  ~ dA_2 }   .   \label{eqn:metric.energy}
\end{eqnarray*}

Note that $E_{sd}(f) = E_{sd}(f^{-1}) $.
A conformal map that minimizes $E_{sd}(f) $ gives the optimal correspondence between $F_1$ and $F_2$
that we seek, and the magnitude of $E_{sd}$  for such an optimizing map 
defines the distance between the two surfaces. 

Our next goal is to show the existence of  a diffeomorphism that minimizes 
$E_{sd}(f)$. A difficulty is that the pointwise limit of a sequence 
of conformal diffeomorphisms may be discontinuous, or may map all of $F_1$ to a single point in $F_2$.
We need to show that a sequence approaching an infimum of the energy does not have this
undesired behavior.
We first examine the special case when $f$ is a conformal map from the round sphere to itself.

\begin{lemma}  \label{Mobius}
If a sequence of Mobius transformations $m_i : S^2 \to S^2$ has no convergent subsequence, 
then there is a subsequence for which  $\lim_{i \to \infty} E_1(m_i)\ =   0 $.
 \end{lemma}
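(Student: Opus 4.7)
The plan is to establish and exploit a dichotomy for sequences in $PSL(2,\CC)$: after passing to a subsequence, either $m_i$ converges uniformly on $S^2$ to a Möbius transformation, or $m_i$ collapses to a single point outside a removable singularity. To obtain the dichotomy, I lift each $m_i$ to a matrix $A_i \in SL(2,\CC)$. Failure of $m_i$ to have a convergent subsequence in $PSL(2,\CC)$ forces $\|A_i\| \to \infty$ along some subsequence. The normalized matrices $B_i = A_i/\|A_i\|$ lie on the unit sphere of $2\times 2$ complex matrices, so a further subsequence converges to some $B$ with $\|B\| = 1$ and $\det(B) = \lim \det(A_i)/\|A_i\|^2 = 0$. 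Such a rank-one limit $B$ has a one-dimensional kernel and one-dimensional image in $\CC^2$, which correspond under the standard identification of $S^2$ with the Riemann sphere to a ``source'' point $q \in S^2$ and a ``target'' point $p \in S^2$. The convergence $B_i \to B$ then translates to $m_i \to p$ uniformly on compact subsets of $S^2 \setminus \{q\}$.

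With this degeneration picture in hand, the argument reduces to a two-piece Cauchy--Schwarz estimate. Since each $m_i$ is a conformal diffeomorphism of $S^2$,
\begin{equation*}
\int_{S^2} \lambda_{m_i}^2 \, dA = \int_{S^2} |\mathrm{Jac}(m_i)| \, dA = \mathrm{Area}(S^2) = 4\pi.
\end{equation*}
Given $\epsilon > 0$, I choose an open disk $U$ about $q$ with $\mathrm{Area}(U) < \epsilon$ and an open disk $V$ about $p$ with $\mathrm{Area}(V) < \epsilon$. By uniform convergence of $m_i$ to $p$ on the compact set $S^2 \setminus U$, we have $m_i(S^2 \setminus U) \subset V$ for all sufficiently large $i$, whence
\begin{equation*}
\int_{S^2 \setminus U} \lambda_{m_i}^2 \, dA \;=\; \mathrm{Area}\bigl(m_i(S^2 \setminus U)\bigr) \;\le\; \mathrm{Area}(V) \;<\; \epsilon.
\end{equation*}

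Splitting $E_1(m_i)$ across $U$ and $S^2 \setminus U$ and applying Cauchy--Schwarz on each piece, bounding the second factor on $U$ by the global value $4\pi$ and on $S^2 \setminus U$ by $\epsilon$, yields
\begin{equation*}
E_1(m_i) \;\le\; \sqrt{\mathrm{Area}(U)}\sqrt{4\pi} + \sqrt{\mathrm{Area}(S^2 \setminus U)}\sqrt{\epsilon} \;\le\; \sqrt{4\pi\epsilon} + \sqrt{4\pi\epsilon} \;=\; 2\sqrt{4\pi\epsilon}.
\end{equation*}
A standard diagonal argument with $\epsilon_k = 1/k$ then extracts a subsequence along which $E_1(m_{i_k}) \to 0$.

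The main obstacle is not the estimate but the clean articulation of the degeneration. The matrix-normalization argument above is one route; alternatively one could invoke the classical fact from complex analysis that any sequence in $PSL(2,\CC)$, after passing to a subsequence, either converges uniformly to a Möbius transformation or converges to a constant on compact subsets of $S^2$ minus a single point. Either way, once the source--sink behavior of a degenerating sequence is in place, the area-based estimate that $E_1(m_i) \to 0$ is essentially immediate.
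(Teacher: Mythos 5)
Your proof is correct, and it takes a genuinely different route from the paper's. The paper argues by explicit computation: it classifies the degenerating M\"obius transformations into three cases according to their fixed points (a single fixed point; two fixed points with distinct limits; two fixed points colliding), conjugates each case into a normal form $z \mapsto z + B_i$, $z\mapsto A_i z$, or $z\mapsto A_i z + B_i$ in stereographic coordinates, and then estimates $\int\lambda_{m_i}\,dA$ directly by decomposing the plane into explicit disks around the source and sink and their complement. You replace all of this with the single structural fact that a divergent sequence in $PSL(2,\CC)$ has a source--sink degeneration $m_i\to p$ locally uniformly on $S^2\setminus\{q\}$ (which your $SL(2,\CC)$ normalization correctly establishes: the rank-one limit $B$ has well-defined kernel and image lines, and $[B_i v]\to[Bv]=p$ uniformly on compacta where $\|Bv\|$ is bounded below), combined with the observation that $\int_{S^2}\lambda_{m_i}^2\,dA=\mathrm{Area}(S^2)=4\pi$ is fixed, so that Cauchy--Schwarz over $U$ and $S^2\setminus U$ forces $E_1(m_i)\le 2\sqrt{4\pi\epsilon}$: the $L^2$ mass of $\lambda_{m_i}$ concentrates near $q$ while the complement of $U$ maps into an arbitrarily small neighborhood of $p$. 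Your route is shorter, avoids the case analysis entirely, and makes transparent that the mechanism is purely the total-area identity for conformal diffeomorphisms; the same argument would show $\int\lambda^p\to 0$ for any $p<2$. (Your final diagonal extraction is not even needed: along the single subsequence on which $B_i\to B$, your bound holds eventually for every $\epsilon$, so $E_1\to 0$ along that subsequence.) What the paper's computation buys in exchange is self-containedness --- no appeal to the classical degeneration dichotomy --- and explicit decay rates for $E_1(m_i)$ in terms of the M\"obius parameters, such as $8\pi|A_i|\log|A_i|/(|A_i|^2-1)$ in the loxodromic case.
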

\begin{proof}
A Mobius transformation is completely determined by the image of three points. Let
$P, Q, R$ be any there points in $S^2$.  If a sequence
of Mobius transformations $\{m_i\}$ takes $P, Q, R$  to points $P_i, Q_i, R_i$, and these converge to 
three distinct points $P', Q', R'$, then  the sequence
of Mobius transformations $\{m_i\}$ is equicontinuous and converges to the unique Mobius transformation
that takes $P, Q, R$ to $P',Q',R'$.  Thus if $\{m_i\}$  has no convergent subsequence then
two or more of $P, Q, R$ are converging to a single point as $i \to \infty$.

A nontrivial Mobius transformation fixes either one (called parabolic) or two (called elliptic or hyperbolic) points on $S^2$.
We divide the proof into three cases, according to the limiting behavior of these fixed points as $ {i \to \infty}$.\\
Case (1): There is a subsequence in which each Mobius transformation has a single fixed point. \\
Case (2):  There is a  subsequence  in which each  Mobius transformation
 has two distinct fixed points, and these converge to two distinct points as $i \to \infty$. \\
Case (3):  There is a subsequence  in which each Mobius transformation has two distinct fixed points, and these converge to a single point as $i \to \infty$. \\
One of these three cases must hold;  if Case (1) does not apply, then all but finitely many Mobius transformations have two distinct fixed points,
and by compactness  either  Case (2) or Case (3) hold for some subsequence.
In each case we show that the values of $E_1$ on the subsequence limit to 0.

The Riemannian metric of the round sphere, with the north pole removed, is isometric 
under stereographic projection to the plane $\RR^2 $ with the Riemannian metric
\begin{eqnarray*}
ds^2 = \frac{4}{(1 + x^2 + y^2)^2} (dx^2 + dy^2  )   .
\end{eqnarray*}
The area form for this metric is
\begin{eqnarray*}
dA_1 =   \frac{4}{(1 + r^2)^2} dA  =   \frac{4}{(1 + r^2)^2} (rdr  \wedge d\theta   ) .
\end{eqnarray*}

\noindent
Case (1): Take a subsequence of $m_i$ consisting of Mobius transformations with a single fixed point and rotate $S^2$ so that each $m_i$ fixes  the north pole.  Note that conjugating by a rotation does not change $E_1(m_i) $ and that we still have no converging subsequence since $S^2$ is compact.
Then each  $m_i$ fixes $\infty$ in the coordinates given by stereographic projection and thus acts as a translation of the plane. By further conjugating by a rotation fixing both 0 and $\infty$, we can arrange that
that $m_i(z) = z+B_i$, with $B_i>0 \in \RR$.  
Divergence of  $\{ m_i \}$ implies that  $B_i \to \infty$. A computation gives that
\begin{eqnarray*}
\lambda_{m_i}(z)=   \frac{1 + |z|^2} {1 + |z+B_i|^2} .
 \end{eqnarray*}

We first consider the integral  of $E_1(m_i)$  over the disk
$D_0 = D(0,B_i/2)$   of radius $B_i/2$ centered at  the origin.
On $D_0$ we have $|z+B_i| \ge B_i/2$, so that

\begin{eqnarray*}
 \int _{D_0} \lambda_{m_i} ~ dA_1   &=&    \int_{D_10}  \frac{1 + |z|^2} {1 + |z+B_i|^2 } \frac{4}{(1 + |z|^2)^2}  ~dA  \\
 &=&   \int_{D_0}    \frac{1 } {1 + |z+B_i|^2 } \frac{4}{1 + |z|^2}  ~dA  \\
 &\le& 4 \int_{D_0} \frac{1 } {1 + |B_i/2|^2 } ~ \frac{1}{1 + |z|^2}  ~dA \\
 &=&   \frac{4\pi} {1 + |B_i/2|^2 }   \int_0^{B_i/2}  \frac{2r}{(1 + r^2)}    ~ dr  \\
&=&   \frac{4\pi} {1 + |B_i/2|^2 }   \log (1 + {B_i}^2 /4).
\end{eqnarray*}
Note that this integral approaches 0 as $ {B_i}\to \infty$.

Next we compute $E_1(m_i)$ over the disk  $D_1  = D((-B_i,0),B_i/2)$  of radius $B_i/2$ centered at $(-B_i,0)$.
The change of variables $z = -w-B_i$ takes  $D_0$ to $D_1$ and we carry out the corresponding change of variables. 
 \begin{eqnarray*}
 \int_{D_1}    \frac{1 } {1 + |z+B_i|^2 } \frac{4}{1 + |z|^2}  ~dA  &=&  \int_{D_0}    \frac{1 } {1 + |w|^2 }~ \frac{4}{1 + |w+B_i|^2}  ~dA 
\end{eqnarray*}
As before, for $w \in D_0$   we have $|w+B_i| \ge B_i/2$, and
again this integral approaches 0 as $ {B_i}\to \infty$.

Finally we consider the integral of $E_1(m_i)$  over $D_2 = \CC - \{ D_0 \cup D_1 \}$. 

\begin{claim} $ |z+B_i| / |z| \ge 1/3$ for  $z \in D_2$. 
\end{claim}
\begin{proof} If  $|z| \le 3B_i/2$ then 
$$
z  \notin D_1 \implies z+B_i \notin D_0 \implies \frac {|z+B_i|}{ |z| } \ge \frac {|B_i/2|}{|3B_i/2|} = \frac {1}{3}.
$$
If $|z| \ge |3B_i/2|$ then $B_i \le 2|z|/3$ and 
$|z+B_i| \ge |z|/3$ so  again $ |z+B_i| / |z| \ge 1/3$.
\end{proof} 

Then
\begin{eqnarray*}
  \int_{D_2}    \frac{1 } {1 + |z+B_i|^2 } ~\frac{4}{1 + |z|^2}  ~dA &\le& 4 \int_{D_2}    \frac{1 } {1 + |z/3|^2 }~ \frac{1}{1 + |z|^2}  ~dA \\
&\le&4  \int_{\CC - D_0}    \frac{9 } { |z|^4 }    ~dA \\
&=&   72\pi   \int_{B_i/2}^\infty     \frac{1 } { r^4 }   ~ rdr  \\
&=&     \frac{144\pi  } {{B_i}^2 } .
\end{eqnarray*}
This term also approaches 0 as $ {B_i} \to \infty$.

Since $D_0 \cup D_1 \cup D_2 = \CC$ and the integral over each approaches 0 as $ {B_i}\to \infty$,
we have shown in Case (1) that $\lim_{i \to \infty} E_1(m_i) = 0$.

\noindent
Case (2):   
By conjugating with rotations as before, we can assume
that each $m_i$ has $\infty$ as an attracting fixed point and also fixes  a second point $p_i$.
In Case (2) we assume that $\lim_{i \to \infty} p_i \ne \infty$, and therefore after passing to a subsequence
we can assume $\lim_{i \to \infty} p_i = p$ for some point $p$.
We can conjugate  $m_i$ by the Mobius transformation $z \to z- p_i$ to get a Mobius transformation 
that fixes $\infty$ and the origin.
This gives a new sequence of Mobius transformations $m_i' $ that fixes both the origin and $\infty$    and
such that $ \alpha E_1(m_i)  <  E_1(m_i')  < \beta E_1(m_i) $ for fixed positive
constants $ \alpha, \beta$. Thus 
$$
\lim_{i \to \infty} E_1(m_i')\ =   0  \Leftrightarrow  \lim_{i \to \infty} E_1(m_i)\ =   0 
$$ 
and we can assume that $m_i$ fixes the origin and $\infty$.

A conformal transformation of the sphere fixing the origin and $\infty$ with $\infty$ as an attracting fixed point has
the form  $m_i(z) = A_i z,  |A_i| > 1 $.  
Since $\{m_i\}$ has no convergent subsequence we must have that $|A_i| \to \infty$. 
The value of $\lambda_{m_i}$ at $z$ is given by
\begin{eqnarray*}
\lambda_{m_i}(z)  = \frac{1 + |z|^2}{1 + |A_i|^2|z|^2} |A_i|.
\end{eqnarray*}
 
\begin{eqnarray*}
E_1(m_i) & = &  \int _0^{2\pi}  \int_0^\infty \lambda_{m_i} ~ \frac{4}{(1 + r^2)^2} ~ rdr d\theta   \\
& = &  2\pi   \int_0^\infty  \frac{(1 + r^2) |A_i|}{1 +  |A_i|^2r^2}  \frac{4}{(1 + r^2)^2}  ~ rdr  \\
& = &2\pi   \int_0^\infty    \frac{4r |A_i|} {(1 + |A_i|^2r^2)(1+r^2)}    ~ dr  \\
& = &\frac{8\pi  |A_i| \log( |A_i|)  } {|A_i|^2-1 } .
 \end{eqnarray*}

 Again $ \lim_{i \to \infty} E_1(m_i) = 0$.

\noindent
Case (3):  As before, we can first rotate $S^2$ so that each $m_i$ has the north pole as an attracting fixed point.
Then  $m_i (z) = A_i z + B_i$  for some $A_i,B_i \in \CC$ with $|A_i| \ge 1$.
By conjugating by a rotation fixing 0 and  $\infty$ we can assume that $B_i \in \RR^+$.
The second fixed point of  $m_i$  is then $P_i = -B_i/(A_i-1)$, and since we assume in Case (3) that $P_i \to \infty$, we have   $\lim_{i \to \infty} |B_i/(A_i-1)| = \infty .$  If $B_i$ is bounded then $A_i \to 1$ and the sequence $m_i = A_i z + B_i$ has a convergent subsequence,
contrary to our assumption. Therefore  $B_i \to \infty$ and hence also $B_i /|A_i| \to \infty$.

We first compute $E_1$  over the disk $D_1  = D(-B_i/A_i, B_i/|2A_i|)$.
The change of variables  $z=  -w/A_i - B_i/A_i$, with Jacobian $1/|A_i|^2$,
transforms this to an integral over  $D_0 =  D(0,  B_i/2 )$.  Note that for $w \in D_0$ we have 
$|w  +B_i| \ge  B_i/2.$
\begin{eqnarray*}
 \int _{D_1}\lambda_{m_i}(z) ~ dA_1  & = &  \int _{D_1}  \frac{|A_i| } {1 + |A_iz + B_i|^2 } ~ \frac{4}{1 + |z|^2}  ~dA \\
& = & \int _{D_0}  \frac{|A_i| } {1 + |w|^2 }  ~\frac{4}{1 + |w/A_i + B_i/A_i |^2} ~\frac{1}{|A_i|^2}  ~dA\\
& = & \int _{D_0}  \frac{|A_i| } {1 + |w|^2 }  ~\frac{4}{|A_i|^2 + | w + B_i |^2}  ~dA\\
& \le & \int _{D_0}  \frac{|A_i| } {1 + |w|^2 }  ~\frac{4}{|A_i|^2 + |B_i/2|^2}  ~dA\\
& = &   \frac{4 |A_i|}{|A_i|^2 + |B_i/2|^2}   \int _0^{2\pi} \int_0^{ \frac{B_i}{2}}  \frac{1 } {1 + r^2}  ~r dr d\theta \\
& = &   \frac{4 \pi |A_i|}{|A_i|^2 + |B_i/2|^2}   \int_0^{ \frac{B_i}{2}}  \frac{2r } {1 + r^2}  dr  \\
& = &    \frac{4 \pi |A_i|  \log ( 1 +   B_i^2 /4  )}{|A_i|^2 + |B_i/2|^2} \\
&\le &    \frac{16 \pi |A_i|  \log ( 1 +   B_i^2 /4  )}{  |B_i|^2} .
\end{eqnarray*}
Since  $|A_i| / B_i  \to 0$, this quantity approaches 0 as $  i \to \infty$.

 Over the disk $D_2 = D(0, B_i/|2A_i|)$ we have   $ |A_iz + B_i| \ge  B_i/2$.
 \begin{eqnarray*}
 \int _{D_2} \lambda(m_i ) ~ dA_1  & = &  \int _{D_2}  \frac{|A_i| } {1 + |A_iz + B_i|^2 } ~ \frac{4}{1 + |z|^2}  ~dA \\
& \le & \frac{|A_i| } {1 +  |B_i/2|^2 } \int _{D_2} ~ \frac{4}{1 + |z|^2}  ~dA \\
& = & \frac{2\pi |A_i| } {1 +  |B_i/2|^2 } \int_0^{B_i/|2A_i|}   ~ \frac{4}{1 + r^2 }  ~ rdr  \\
& = & \frac{4\pi |A_i| } {1 +  |B_i/2|^2 } \log ( 1 +   (B_i/|2A_i|)^2) \\
& \le & \frac{16 \pi |A_i| } { B_i}\frac{ \log ( 1 +    B_i^2) } { B_i}   .
\end{eqnarray*}
This also approaches 0 as $  i \to \infty$.

Finally we consider the region $D_3 = \CC - \{ D_1 \cup D_2\}$.

\begin{claim} In $D_3$ we have $|A_iz + B_i| \ge  B_i/2 $ and $|A_iz + B_i| \ge   |z /3| $.
\end{claim}
\begin{proof}
The first inequality follows from $z \notin D_1 \implies A_iz + B_i \notin D_2 \implies |A_iz + B_i| \ge  B_i/2 $.

We consider two cases for the second inequality.
If  $|z| \le 3B_i/|2A_i|$ then  $ z \notin D_1 \implies A_iz+B_i \notin D_0$ and
$$
\frac{ |A_iz+B_i| }{ |z| } \ge  \frac{ B_i/2 }{3B_i/|2A_i|} = \frac{  |A_i|}{3} \ge \frac{ 1}{3}.
 $$
If $|z| \ge  3B_i/|2A_i|$ then 
$$
 B_i  \le \frac{2 |A_iz |}{3} \implies  |A_iz +B_i|    \ge \frac{|A_iz |}{3} \ge \frac{|z|}{3},
$$
giving the second inequality.
\end{proof} 
 
Then
\begin{eqnarray*}
 \int _{D_3} \lambda_{m_i}(z)  ~ dA_1  & = &  \int _{D_3}  \frac{|A_i| } {1 + |A_iz + B_i|^2 } ~ \frac{4}{1 + |z|^2}  ~dA \\
& \le & \ \int _{D_3}  \frac{|A_i| } { |A_iz + B_i|^2 } ~ \frac{4}{ |z|^2}  ~dA \\
& \le & \int _{D_3}  \frac{|A_i|} {  |z /3|(B_i/2 ) } ~ \frac{4}{ |z|^2}  ~dA \\
& \le & 24\int _{\CC -D_2}  \frac{|A_i|} {  B_i   |z|^3 } ~  dA \\
 & = & \frac{ 48\pi |A_i|}{B_i}   \int _{B_i/|2A_i|}^\infty   \frac{1} { r^3 }  ~r dr   \\
& =& \frac{ 96 |A_i|^2  }{ {B_i}^2}.
\end{eqnarray*}
Since $|A_i|/B_i \to 0$, this also approaches 0 as $i \to \infty$. 
 
We have shown in all cases that $\lim_{i \to \infty} E_1(m_i)\ =   0 $, proving the Lemma.
\end{proof}

We now show that 
$E_{sd}$ achieves a minimum for an appropriate choice of conformal map $ f_0: F_1 \to F_2$.  
Let 
\[
{\mathcal I} = \inf \{ E_{sd} (f): f : F_1 \to F_2 \mbox{ is a conformal diffeomorphism}\} 
\]
A {\em minimizing sequence } of conformal maps $  \{ f_i : F_1 \to F_2 \} $  is a sequence with non-increasing values
for $E_{sd}$ such that
\[
\lim_{i \to \infty} E_{sd} (f_i) = {\mathcal I}.
\]

\begin{theorem}
There exists a conformal diffeomorphism  $f : F_1 \to F_2$ with  $E_{sd} (f_i) = {\mathcal I}$.
\end{theorem}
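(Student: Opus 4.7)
The plan is to use the Uniformization Theorem to identify the space of conformal diffeomorphisms $F_1 \to F_2$ with the six-dimensional Mobius group $PSL(2,\CC)$, show that the infimum $\mathcal{I}$ is strictly less than the value to which divergent sequences of Mobius transformations drive $E_{sd}$, and then extract a convergent subsequence from any minimizing sequence. Fix once and for all conformal diffeomorphisms $c_1 : F_1 \to S^2$ and $c_2 : F_2 \to S^2$. Every conformal $f : F_1 \to F_2$ then has the form $f = c_2^{-1} \circ m \circ c_1$ for a unique Mobius $m$. Write $\sigma_j = \lambda_{c_j}$; these are smooth strictly positive functions on compact surfaces and hence bounded between positive constants. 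The chain rule followed by a change of variables to $S^2$ gives
\[
E_1(f) \;=\; \int_{S^2} \frac{\lambda_m(y)}{\sigma_1(c_1^{-1}(y))\,\sigma_2(c_2^{-1}(m(y)))}\; dA_{S^2}(y),
\]
so $E_1(f)$ and $E_1(m)$ differ only by a bounded factor, and an analogous estimate controls $E_1(f^{-1})$ by $E_1(m^{-1})$.

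Next I would establish a strict ceiling on $\mathcal{I}$. By the computation in Lemma \ref{E1}, for any conformal $f$ we have $E_{el}(f) = \mathrm{Area}(F_1) + \mathrm{Area}(F_2) - 2E_1(f)$. Since $\lambda_f > 0$ pointwise, $E_1(f) > 0$, so $E_{el}(f) < \mathrm{Area}(F_1)+\mathrm{Area}(F_2)$, and likewise for $f^{-1}$. Summing the two square roots, $E_{sd}(f) < 2\sqrt{\mathrm{Area}(F_1)+\mathrm{Area}(F_2)}$ for every conformal diffeomorphism $f$, and hence
\[
\mathcal{I} \;<\; 2\sqrt{\mathrm{Area}(F_1)+\mathrm{Area}(F_2)}.
\]

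Now take a minimizing sequence $\{f_i\}$ with associated Mobius parameters $\{m_i\}$. If $\{m_i\}$ had no convergent subsequence in $PSL(2,\CC)$, then neither would $\{m_i^{-1}\}$, since inversion is a homeomorphism of $PSL(2,\CC)$. Applying Lemma \ref{Mobius} twice extracts a common subsequence along which $E_1(m_i)\to 0$ and $E_1(m_i^{-1})\to 0$. The bounds above then yield $E_1(f_i)\to 0$ and $E_1(f_i^{-1})\to 0$, and the identity of Lemma \ref{E1} gives $E_{sd}(f_i)\to 2\sqrt{\mathrm{Area}(F_1)+\mathrm{Area}(F_2)} > \mathcal{I}$, contradicting minimality. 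Hence some subsequence $m_{i_k}\to m_\infty$ in $PSL(2,\CC)$; setting $f_\infty = c_2^{-1}\circ m_\infty\circ c_1$ gives a conformal diffeomorphism, and continuous dependence of $\lambda_m$ on the Mobius parameter (uniform on $S^2$) combined with dominated convergence yields $E_{sd}(f_\infty)=\mathcal{I}$.

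The main obstacle is the non-compactness of $PSL(2,\CC)$: a minimizing sequence could a priori escape to infinity in parameter space. Lemma \ref{Mobius} was proved precisely to control this escape, converting divergence in parameter space into the degeneration $E_1 \to 0$, which via the identity of Lemma \ref{E1} drives $E_{sd}$ up to the ceiling $2\sqrt{\mathrm{Area}(F_1)+\mathrm{Area}(F_2)}$. Strict positivity of $\lambda_f$ for any honest conformal map keeps $\mathcal{I}$ strictly below this ceiling, ruling out the escape scenario and reducing the argument to a standard extraction of a convergent subsequence followed by continuity of the energy.
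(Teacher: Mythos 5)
Your proposal is correct and follows essentially the same route as the paper: reduce via the Uniformization Theorem to a Mobius parameter, compare $E_1(f)$ with $E_1(m)$ using the bounded dilations of the fixed uniformizing maps, invoke Lemma~\ref{Mobius} to show that a divergent minimizing sequence would force $E_{sd}$ up to the ceiling $2\sqrt{\mathrm{Area}(F_1)+\mathrm{Area}(F_2)}$, and use the strict positivity of $E_1$ (equivalently, the paper's comparison with $E_{sd}(f_1)$) to rule this out before extracting a convergent subsequence. The argument matches the paper's proof in both structure and substance.
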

\begin{proof}
By the Uniformization Theorem we know there exist conformal diffeomorphisms $c_1 : F_1 \to S^2$ and $c_2 : F_2 \to S^2$. 
The set of all  conformal diffeomorphisms
from $F_1$ to $F_2$ is given by  maps 
$$ 
f = c_2^{-1}  \circ m   \circ c_1 : F_1 \to F_2,
$$ 
where $m :  S^2 \to S^2$ is a Mobius transformation. Thus we need to show that an appropriate choice of Mobius transformation $m_i$ gives a minimizer for $E_{sd}$.

Let $\{ f_i : F_1 \to F_2 \}$ 
be a  minimizing sequence. Then for each $f_i$ we have  $E_{sd} (f_i)  \le E_{sd} (f_1)$
We need to show that $\{ f_i \}$ has a  subsequence converging to a map $f_0$ with $E_{sd} (f_0) = I$.
For each $i$ we can write $ f_i = c_2^{-1}  \circ m_i   \circ c_1 $ for some
Mobius transformation $m_i :  S^2 \to S^2$.
For the maps $ f_i  = c_2^{-1} m_i c_1 : F_1 \to F_2, $ we have 
$$
E_1(f_i) = \int_{F_1} \lambda_{f_i}  ~ dA = \int_{F_1} \lambda_{c_1}  \lambda_{m_i}  \lambda_{c_2^{-1}}  ~ dA 
$$
By compactness of $F_1, F_2$ and $S^2$, there are positive  constants $a,A$ and $b,B$ such that $a < \lambda_{c_1}  < A$ and  $b <   \lambda_{c_2^{-1}} < B$. Letting $c=ab $ and $C = AB$ it follows that
$$
c E_1(f_i) < E_1(m_i)  < C E_1(f_i).
$$
In particular, 
\[ E_1(f_i)\to 0  \Leftrightarrow   E_1(m_i)\to 0    \Leftrightarrow  E_1(m_i^{-1})\to 0   \Leftrightarrow    E_1(f_i^{-1})\to 0  .
\]
Assume now that $f_i$ has no convergent subsequence.
Then neither does $m_i$ or $f_i^{-1}$, and Lemma~\ref{Mobius} implies that both $E_1(f_i) \to 0$ and $E_1(f_i^{-1}) \to 0$. 
Recall that 
\begin{eqnarray*}
E_{sd}(f) & = &   \sqrt{E_{el} (f)} +    \sqrt{ E_{el} (f^{-1} ) }   \\
& = &   \sqrt{\mbox{A}(F_2) + \mbox{A}(F_1)  -2  E_1(f)} +    \sqrt{  \mbox{A}(F_2) + \mbox{A}(F_1)  -2  E_1 (f^{-1} ) }  
\end{eqnarray*}
so that 
\[
E_{sd} (f_i)  \to  2  \sqrt{ \mbox{Area}(F_2) + \mbox{Area}(F_1}) .
 \]
Now $E_{sd} (f_1)  <  2  \sqrt{ \mbox{Area}(F_2) + \mbox{Area}(F_1}) - \epsilon$ for some $\epsilon > 0$ so that
for $i$ sufficiently large,
 $E_{sd}  (f_i)  > E_{sd} (f_1) $.  This cannot happen for a minimizing sequence, and thus we have a contradiction
 to  the assumption that $\{f_i\}$ does not have a convergent subsequence. 

A convergent subsequence of a minimizing sequence gives a new minimizing
sequence. A convergent sequence of Mobius transformations that limit to a Mobius transformation converges
smoothly to the limiting map, and as a consequence the conformal maps $\{ f_i \}$ also
converge smoothly to a limiting map $f_0$, whose energy is equal to ${\mathcal I} $.
 \end{proof}

\subsection{A metric on shape space}

We now show  that $E_{sd}$ gives a metric on the space of spherical shapes $\mathcal S$.
 
Define a distance function $d_{sd}: \mathcal{S}\times \mathcal{S} \to \mathcal{R}$ by taking the minimal symmetric energy over the space 
 of conformal maps:
\begin{eqnarray*}
d_{sd}(F_1, F_2) =  \mathcal{I} = \inf  \{ E_{sd}(f) | f : F_1 \to F_2 \quad  \text{is a conformal diffeomorphism}  \} .
\end{eqnarray*}
 \begin{theorem}
The function $d_{sd}$ defines a metric on $\mathcal S$.
\end{theorem}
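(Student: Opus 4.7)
The plan is to verify the three metric axioms for $d_{sd}$ in turn. The first two are essentially formal and follow from the existence theorem just established together with the symmetric definition of $E_{sd}$; the real work is in the triangle inequality, which requires an estimate for the composition of conformal maps.

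For non-negativity, $E_{sd}(f)\ge 0$ is immediate since it is a sum of square roots of non-negative integrals. If $d_{sd}(F_1,F_2)=0$, the existence theorem supplies a conformal diffeomorphism $f_0$ realizing the infimum, so $E_{el}(f_0)=0$; because $\lambda_{f_0}$ is continuous and positive this forces $\lambda_{f_0}\equiv 1$, whence $f_0^{*}g_2=g_1$ and $f_0$ is an isometry. Conversely, if $F_1$ and $F_2$ are isometric, taking $f$ to be the isometry gives $\lambda_f\equiv 1$ and $E_{sd}(f)=0$. For symmetry, the map $f\mapsto f^{-1}$ is a bijection between conformal diffeomorphisms $F_1\to F_2$ and those $F_2\to F_1$, and $E_{sd}(f)=E_{sd}(f^{-1})$ by construction, so the two infima coincide.

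The substantive step is the triangle inequality. Given conformal diffeomorphisms $f:F_1\to F_2$ and $g:F_2\to F_3$, the composite $g\circ f:F_1\to F_3$ is again a conformal diffeomorphism, with pointwise dilation $\lambda_{g\circ f}(x)=\lambda_g(f(x))\,\lambda_f(x)$. I would use the algebraic identity
\[
\lambda_{g\circ f}(x)-1=\bigl(\lambda_f(x)-1\bigr)+\lambda_f(x)\bigl(\lambda_g(f(x))-1\bigr),
\]
apply the Minkowski inequality in $L^2(F_1,dA_1)$, and then invoke the change of variables $y=f(x)$ with $dA_2(y)=\lambda_f^2(x)\,dA_1(x)$ to recognize
\[
\int_{F_1}\lambda_f^2(x)\bigl(\lambda_g(f(x))-1\bigr)^2 dA_1(x)=\int_{F_2}(\lambda_g(y)-1)^2\,dA_2(y)=E_{el}(g).
\]
This yields $\sqrt{E_{el}(g\circ f)}\le\sqrt{E_{el}(f)}+\sqrt{E_{el}(g)}$. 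Running the same argument on $(g\circ f)^{-1}=f^{-1}\circ g^{-1}:F_3\to F_1$ gives $\sqrt{E_{el}((g\circ f)^{-1})}\le\sqrt{E_{el}(g^{-1})}+\sqrt{E_{el}(f^{-1})}$. Summing and regrouping the four square-root terms produces
\[
E_{sd}(g\circ f)\le E_{sd}(f)+E_{sd}(g).
\]
Since $g\circ f$ is admissible in the infimum defining $d_{sd}(F_1,F_3)$, taking infima over $f$ and $g$ independently yields $d_{sd}(F_1,F_3)\le d_{sd}(F_1,F_2)+d_{sd}(F_2,F_3)$.

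The only real obstacle is this Minkowski-plus-change-of-variables step, and it is precisely what motivates the square roots baked into the definition of $E_{sd}$: without replacing $E_{el}$ by $\sqrt{E_{el}}$ the $L^2$ triangle inequality is unavailable and subadditivity under composition fails. Once this estimate is in hand, the three axioms combine to show that $d_{sd}$ is a metric on $\mathcal{S}$.
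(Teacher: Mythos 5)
Your proposal is correct and follows essentially the same route as the paper: the heart of both arguments is the subadditivity $\sqrt{E_{el}(g\circ f)}\le\sqrt{E_{el}(f)}+\sqrt{E_{el}(g)}$, obtained from the dilation identity $\lambda_{g\circ f}=(\lambda_g\circ f)\,\lambda_f$ together with the change of variables $dA_2=\lambda_f^2\,dA_1$, followed by summing with the analogous inequality for $f^{-1}\circ g^{-1}$. Your invocation of Minkowski's inequality is just a cleaner packaging of the paper's explicit computation (squaring, applying Cauchy--Schwarz to the cross term, and recognizing the perfect square $(1-\lambda_f\lambda_g)^2$), and your more detailed treatment of the definiteness axiom fills in a step the paper leaves as a "direct consequence."
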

\begin{proof}
Let $F_1, F_2, F_3$ be  three genus-zero smooth surfaces.
To  show that $d_{sd}$ is a metric we need to check that:
\begin{enumerate}
\item $d_{sd}(F_1,F_2) \ge 0$
\item  $d_{sd}(F_1,F_2)= 0 \iff $ $F_1$ and $F_2 $ are isometric.
\item   $d_{sd}(F_1 ,F_2)=  d_{sd}(F_2, F_1)$
\item $d_{sd}(F_1 ,F_3) \le  d_{sd}(F_1 ,F_2) + d_{sd}(F_2, F_3)$
\end{enumerate}
The first three properties are direct consequences of Theorem 2.5 and  of the formula for the symmetric distortion energy $E_{sd}$.
We now establish  the triangle inequality.
Suppose that  $f : F_1 \to F_2$ and $g : F_2 \to F_3$ are conformal diffeomorphisms with dilations 
$  \lambda_{f} $ and $ \lambda_{g} $. Note first that $g\circ f:F_1 \to F_3$ is a conformal diffeomorphism with dilation $\lambda_{f}\lambda{g}$. We establish first the following lemma.
\begin{lemma}
\label{lemm:l1}
\begin{eqnarray*}
\sqrt{E_{el}(f)} + \sqrt{E_{el}(g)} \ge \sqrt{E_{el}(g\circ f)}
\end{eqnarray*}
\end{lemma}
\begin{proof}
Let $K = \sqrt{E_{el}(f)} + \sqrt{E_{el}(g)}$. Then
\begin{eqnarray*}
K^2 &=& E_{el}(f) + E_{el}(g) + 2 \sqrt{E_{el}(f)} \sqrt{E_{el}(g)} \\
&=& \int_{F_1} (1- \lambda_f)^2~ dA_1    + \int_{F_2}(1-\lambda_g)^2   ~ dA_2 
 + 2   \sqrt{  \int_{F_1} (1- \lambda_f)^2 ~ dA_1       \int_{F_2} (1-\lambda_g)^2   ~ dA_2    }.
 \end{eqnarray*}
 A change of variables in the second term gives
\begin{eqnarray*}
 \int_{F_2} (1-\lambda_g )^2 ~ dA_2  =   \int_{F_1} (1-\lambda_g)^2   ~\mbox{Jac}(f)~ dA_1   = 
  \int_{F_1} (1 - \lambda_{g} )^2    \lambda_f^2    ~ dA_1.
\end{eqnarray*}
Therefore,
\begin{eqnarray*}
K^2=\int_{F_1} (1- \lambda_f)^2    + (1-\lambda_g)^2    \lambda_f^2    ~ dA_1 
 + 2   \sqrt{  \int_{F_1} (1- \lambda_f)^2 ~ dA_1       \int_{F_1} (1-\lambda_g)^2    \lambda_f^2    ~ dA_1    }.
\end{eqnarray*}
The Cauchy-Schwartz inequality can be applied to the second term on the right side of the equation above,
\begin{eqnarray*}
   \sqrt{  \int_{F_1} (1- \lambda_f)^2 ~ dA_1   \int_{F_1} (1-\lambda_g)^2    \lambda_f^2    ~ dA_1    }.
\ge  \int_{F_1}    (1- \lambda_f) (1-\lambda_g)   \lambda_f  ~ dA_1 .
\end{eqnarray*}
Therefore,
\begin{eqnarray*}
K^2 &\ge& \int_{F_1} (1- \lambda_f)^2    + (1-\lambda_g)^2    \lambda_f^2    ~ dA_1  + 2  \int_{F_1}    (1- \lambda_f) (1-\lambda_g)   \lambda_f  ~ dA_1 \\
&\ge& \int_{F_1} (1- \lambda_f)^2    + (1-\lambda_g)^2    \lambda_f^2 +2(1- \lambda_f) (1-\lambda_g)   \lambda_f   ~ dA_1.
\end{eqnarray*}
Expansion of this last integrand gives:
\begin{eqnarray*}
&&(1- \lambda_f)^2    + (1-\lambda_g)^2    \lambda_f^2  +2(1- \lambda_f) (1-\lambda_g)   \lambda_f  \\
&=& 1-2\lambda_f + \lambda_f^2 + \lambda_f^2 -2\lambda_g \lambda_f^2 + \lambda_g^2\lambda_f^2 +2\lambda_f -2\lambda_f^2 -2\lambda_f\lambda_g +2\lambda_f^2\lambda_g \\
&=& 1 - 2\lambda_f \lambda_g + \lambda_f ^2\lambda_g ^2\\
&=& (1-\lambda_f\lambda_g)^2.
\end{eqnarray*}
Therefore,
\begin{eqnarray*}
K^2 &\ge& \int_{F_1}  (1-\lambda_f\lambda_g)^2 ~dA_1\\
&\ge& E_{el}(g\circ f) .
\end{eqnarray*}
As $K$ and $E_{el}(g\circ f)$ are positive, this concludes the proof.
\end{proof}
We  now return to the proof of Theorem 2.6. 
From Lemma \ref{lemm:l1}, we have:
\begin{eqnarray*}
\sqrt{E_{el}(f)} + \sqrt{E_{el}(g)} \ge \sqrt{E_{el}(g\circ f)} .
\end{eqnarray*}
Similarly,
\begin{eqnarray*}
\sqrt{E_{el}(f^{-1})} + \sqrt{E_{el}(g^{-1})} \ge \sqrt{E_{el}(f^{-1}\circ g^{-1})} .
\end{eqnarray*}
By summing these two inequalities, we get,
\begin{eqnarray}
E_{sd}(f) + E_{sd}(g) \ge E_{sd}(g\circ f) .
\label{eqn:esd}
\end{eqnarray}
Now let us suppose that $f_1$ is a diffeomorphism that realizes $d_{sd}(F_1,F_2)$ and $g_1$ 
a diffeomorphism that realizes $d_{sd}(F_2,F_3)$, so that
 $d_{sd}(F_1,F_2)=E_{sd}(f_1)$ and $d_{sd}(F_2,F_3)=E_{sd}(g_1)$. 
From Equation (\ref{eqn:esd}) we conclude that
\begin{eqnarray*}
E_{sd}(g_1\circ f_1) \le d_{sd}(F_1,F_2) + d_{sd}(F_2,F_3) .
\end{eqnarray*}
Since $d_{sd}(F_1,F_3)\le E_{sd}(g_1\circ f_1)$, the triangle inequality holds for $d_{sd}$.
\end{proof}

\subsection{A scale invariant distance}

In many applications it is desirable to compare two shapes that are defined only 
up to scale.  Sometimes data is presented
in units which are unknown and cannot be compared to standardized units.
It can also be useful to separate scale from other aspects of shape comparison.
For this reason we introduce a scale invariant version of the symmetric distortion metric, which we call
the  {\em normalized symmetric distortion metric}.
The computation of the normalized symmetric distance of two surfaces is found by first rescaling
so that each has surface area equal to one, and then computing the symmetric distortion metric as before.
This normalized symmetric distortion metric will be
used in the mathematical and biological measurements that we describe in Section~\ref{computations}.

\subsection{Other energies}
Arguments  similar to those of Lemma~\ref{Mobius} imply that many other energy functions on the space of conformal maps also realize a minimum value on some explicit conformal map. It is not immediately clear whether these energies define a distance metric on 
the space of genus-zero surfaces.

{\bf Definition.} The {\em $L^p$ distortion energy} of a conformal diffeomorphism $f : F_1 \to F_2$ with dilation function $\lambda = e^u$ and $ 1 \le p < \infty$ is given by
 \begin{eqnarray*}
E_{p}(f) =  \left[ \int_{F_1 } |u(x)|^p ~ dA_1  \right]^{1/p} .   
\end{eqnarray*} 
An energy-minimizing conformal diffeomorphism exists for each $E_{p}$, since
if a minimizing sequence has no convergent subsequence then $ \lambda \to 0$ and
$|u | \to \infty$ away from small neighborhoods of two points.  It then follows
that $E_{p}(f)  \to \infty$ for each $p$. The contribution of the inverse can be added in as before to
give a symmetrized energy, which is also unbounded outside of a compact set of Mobius transformations,
 \begin{eqnarray*}
E_{sd_p}(f) =  \left[ \int_{F_1 } |u(x)|^p ~ dA_1  \right]^{1/p} +  \left[ \int_{F_2 } |u(y)|^p ~ dA_2  \right]^{1/p} .   
\end{eqnarray*}
Note that $f^{-1}$ has dilation $1/\lambda = e^{-u}$ so that the formula for $E_{p}(f^{-1}) $ 
appears similar to that of $E_{p}(f) $, but the function $|-u|$ is evaluated on $F_2$ for the case of $f^{-1}$.

\section{Meshed Surfaces}

In applications we generally work with surfaces described by meshes, or
piecewise-flat  triangulations, rather then smooth surfaces.
These metrized triangulations either have coordinates in $\RR^3$ given for each vertex, or have  a length given for
each edge. 
In either case  a metric is determined in which each triangle is flat and each edge has an assigned length.
The metric is smooth except at the vertices, where the surrounding angle may be less than $2\pi$.
A conformal map is approximated in this setting by an appropriately defined {\em discrete conformal map},
as described by Luo \cite{Luo}.

 Given a surface of genus zero $F_1$ with a metrized triangulation we compute a discrete conformal map 
 $f : F_1 \to S^2$ from $F_1$ to the unit 2-sphere in $\RR^3$  using the algorithm of  Bobenko, Pinkall and Springborn \cite{BobenkoPinkallSpringborn}. 
While we have adopted this procedure for the computations presented here, we note that other methods
of computing discrete conformal maps, such as circle packings or the discrete Ricci flow, can also be used.

We consider a triangular mesh $\mathcal{M}=(V,E,T)$  in a surface. We do not restrict its 
combinatorial type. 
The  geometry of the surface represented by $\mathcal{M}$ is encoded in its edge lengths. 
A \emph{discrete metric} on $M$ is a function $l$ defined on the set of edges $E$ of the mesh, which assigns to each edge $e_{ij}$ a length $l_{ij}$ so that the
triangle inequalities are satisfied for all triangles in $T$. 

When working with meshes and discrete metrics, the  elastic energy integral is approximated by a sum over the mesh.
We consider two triangular meshes $\mathcal{M}_1$ and $\mathcal{M}_2$  in ${\mathbb R}^3$  with possibly different combinatorics and different geometries. The geometries are encoded either in 
the positions of the vertices or an assignment of lengths to the edges.  
Given a transformation $f : F_1 \to F_2$, in \cite{KoehlHass:14} we worked with an  elastic energy given by
\begin{eqnarray*}
\sum_{e_{ij} \in E} \left(  \frac{ l(f(e_{ij})) } {l(e_{ij})}  -1 \right)^2,
\end{eqnarray*}
where $l(e_{ij})$  indicates the length of the edge $e_{ij}$ in $F_1$,   $ l(f(e_{ij})) $ the length of the image of this edge in $F_2$, 
 and the sum is over all edges of the mesh on $F_1$.  When $f$ is conformal and the mesh is close to uniform then the quantity $ l(f(e_{ij}))  / l(e_{ij}) $  approximates $\lambda_f$, and this sum is an approximation of the symmetric distortion energy. However the sum is dependent on the size of the mesh, increasing with the number of edges. 
To make this quantity mesh independent,
we  weight the terms bythe area of the region to which each edge contributes.
This leads to the following formula for a mesh independent elastic energy,
\begin{eqnarray*}
L(f) = \sum_{e_{ij} \in E} \left(  \frac{ l(f(e_{ij})) } {l(e_{ij})}  -1 \right)^2  \frac{A_{ij} }{3}.
\end{eqnarray*}
Here $A_{ij}$ is the sum of the areas of the two triangles adjacent to edge $e_{ij}$ and the sum is over all edges
of the mesh on $F_1$.
The weighting factor (1/3) assigns to each edge the portion of the area of the two adjacent triangles obtained by dividing the triangles into three pieces.
 
The symmetric distortion energy in the discrete setting is then obtained by summing over the edges $E$ of the $F_1$ mesh and the edges $E'$ of the mesh on $F_2$:

 \begin{eqnarray}
E_{sd} (f)   = \sqrt{ \sum_{e_{ij} \in E} \left(  \frac{ l(f(e_{ij})) } {l(e_{ij})} -1 \right)^2  \frac{A_{ij} }{3}} \\
+  \sqrt{ \sum_{{e'_{kn}} \in E'} \left( \frac{ l(f^{-1} (e'_{kn})) } { l(e'_{kn})} -1 \right) ^2 \frac{A_{kn} }{3}}
\label{eqn:energy}
\end{eqnarray}

\subsection{Procedure and Implementation}

We begin with two combinatorial surfaces $F_1, F_2$ with metrized triangulations $\tau_1, \tau_2$.  We then implement the following steps. The process is indicated in Figure~\ref{fig:overview}.
 
\begin{enumerate}

\item{Construct conformal maps to the unit sphere.} \\
We use the methods of \cite{BobenkoPinkallSpringborn}  to construct  discrete conformal  maps $c_1: F_1 \to S^2$ and $c_2: F_2 \to S^2$  from each of a  pair of  genus-zero surfaces $F_1, F_2$ to the unit sphere $S^2$. 

\item {Move the  centers of mass of the vertices to the origin.}\\
This step is done for numerical stability. We compose $c_1$ with a Mobius transformation $m_1$ and $c_2$ with a Mobius transformation $m_2$ so that the vertices of $m_1 \circ  c_1(\tau_1)$ and $ m_2 \circ c_2(\tau_2)$  have centers of mass at the origin.
This step is done to prevent a choice of a conformal map $c_1$ which pushes most of the vertices into a small neighborhood of one point on the sphere. Any choice of conformal map from $F_1 \to S^2$ and from $F_2 \to S^2$ is theoretically valid for our method, but some are computationally problematic.

 \item{Map the source mesh onto the target surface.}\\
A Mobius transformation $m: S^2 \to S^2$ induces a map $ {c_2}^{-1}  \circ m  \circ c_1$ of the vertices of $\tau_1$ to $F_2$.
Given a vertex $v_i$ in $F_1$, we identify its image $v'_i$ in the spherical mesh $ c_1(F_1) $. We then
 locate its image $v''_i=m (v'_i)$ on the spherical mesh ${c_2(F_2)}$ and transfer this point to the surface $F_2$ by applying $c_2^{-1}$.  
The image of a point that is not a vertex is specified using barycentric coordinates of the simplex that contains the point.
 
\item{Find an optimal M\"{o}bius transformation.}\\
We search for the M\"{o}bius transformation $m: S^2 \to S^2$ that gives rise to a closest to isometric mapping among conformal maps between the two surfaces of interest, by searching for a global minimum of $E_{sd} $ as given in Equation~(\ref{eqn:energy}). We obtain a candidate as the solution of a non-linear optimization problem, via a steepest descent approach to solve this problem. Steepest descent methods are generally fast, but sensitive to local minima  and thus dependent on  the choice of an initial approximation to a solution. 

A random or fixed initial guess, such as the identity transformation, is likely to lead to a non-optimal local minimum.
Each initial guess is determined by specifying the images of three fixed points on $F_1$.  We can get a 
collection of initial assignments comparable in density to the size $n$ of the mesh on $F_2$ by choosing all
possible assignments for these three points that send them to vertices of $F_2$. 
The set of possible choices is then $O(n^3)$, which is prohibitive for large meshes.
We use a  procedure developed in \cite{KoehlHass:14} to automatically generate a collection of reasonable initial starting points. 
The method uses  ellipsoid approximations to $F_1$ and $F_2$ to give the initial alignment. Each ellipsoid approximation generates six points on each surface, corresponding to extremal  points where the three coordinate-axis meet the surface.
We label these points $x_\pm^1, y_\pm^1, z_\pm^1$ and $x_\pm^2, y_\pm^2, z_\pm^2$ . An initial choice of Mobius transformation is uniquely determined by the image of three of the points on $F_1$.  We  have six choices of where to initially map $x_+^1$, namely any of $x_\pm^2, y_\pm^2, z_\pm^2$. The point $x_-^1$ is then assigned to the antipodal point on the $F_2$ ellipsoid. We then have four choices of where to map $y_+^1$, namely to any of the four points orthogonal to the image of the first point on the ellipsoid.  The image of $z_+^1$ is then determined by orientation. 
Thus we have a total of 24 choices of initial mappings that are orientation preserving.  Once the image of three points is
specified, a unique Mobius transformation is determined, and this is used as one of our collection of initial maps.

If we also want to consider orientation reversing correspondences, then we first reverse the sign of each $z$-coordinate of $F_1$ and then reapply the process using the reflected surface. This gives a total of up to 48 initial correspondences in the
unoriented case.

We then apply steepest descent based on the symmetric distortion energy  to find an optimal conformal transformation.
We use Equation~(\ref{eqn:energy}) to compute the symmetric distortion energy of $f$ and  $f^{-1}$, and the gradient of this energy to find a minimum value.  We compute the symmetric distortion distance as the smallest value found for the symmetric distortion energy, and use the associated diffeomorphism as an approximation of the  symmetric distortion energy minimizing map.
\end{enumerate}

\section{Geometric Computations} \label{computations}
In this section we explore the geometric meaning of the $d_{sd}$-distance 
by computing it for pairs of well-understood geometric objects.
This allows us to develop a sense of what $d_{sd}$ is measuring.
We measure the $d_{sd}$-distance between spheres of varyiing radii, ellipsoids of varying principal axes,
and surfaces of varying roughness.  We also study the effects of decreasing the density of a mesh and
of changing the orientation of a surface.
 
Features that we would like to see hold for $d_{sd}$ to allow its use as a robust shape measurement tool are:
\begin{enumerate}
\item High sensitivity to small changes in area,
\item High sensitivity to small changes in shape,
\item Low sensitivity to small amounts of noise,
\item Mesh independence,
\item Low sensitivity to deformations that preserve intrinsic surface geometry,
\item Ability to distinguish an object from its reflection.  
\end{enumerate}
We show by a series of computational experiments 
that $d_{sd}$ exhibits highly favorable behavior for each of these features.

Feature (1), sensitivity to area change,
can be valuable in some settings, such as measuring the growth of an organism or of a tumor over time.
In other settings we want to consider only shapes up to scale, such as when scans are obtained without
a consistent measurement scale.
If we want to ignore the effect of changing area, we can normalize all areas to one by rescaling. 
Feature (2), sensitivity to small changes in shape, can be measured in a variety of ways.  Below we 
investigate the effect on  $d_{sd}$-distance of the deformation of a sphere to an ellipsoid which is stretched along
one axis while maintaining constant area.
Feature (3) is essential for robust distance measurements that are not unduly affected
by small amounts to noise or measurement error.
Nose sensitivity is measured by looking at the effect on $d_{sd}$-distance of  random perturbations of the vertices
of a sphere. 
The mesh independence property of Feature (4) implies that $d_{sd}$-distances
 are not dependent on the choice of a mesh or triangulation used to represent a surface. 
 This allows for comparing the geometric similarity of objects having meshes of varying density and
combinatorial type, subject only to the mesh accurately representing the surface.
Feature (5) is important for the comparing of flexible surfaces, such as the surfaces of
proteins, faces and animals that take on different configurations or poses.
Feature (6) allows for the $d_{sd}$-distance to distinguish objects that differ only in 
chirality, such as left and right hands, or left and right molars.

\subsection{Area rescaling}
 
 In many applications shapes are presented without scales.  For example, two medical
 images produced with different machines can describe the same shape in different
 coordinates whose relative magnitudes is not known.  Thus it is often convenient to first
 rescale each of the two surfaces being compared so that they have the same area, which
 we can take to be  equal to one.
 
 However in some cases it is useful to measure the effect of a change of scale. For example one may want
 to measure the growth of an object over time. When scale is the only difference between
 two shapes $F_1$ and $F_2$ then $E_{sd}$  measures an integral of the stretching required to enlarge one to fit the other.
 The formula for the energy required to perform such ra rescaling can be directly computed.
The distance  $d_{sd}(S_1, S_2)$
 between spheres $S_1$ of area $A_1$ and $S_2$ of area $A_2$ whose optimal alignment is realized by rescaling can
 be computed using Equation~(\ref{eqn:metric.energy}), giving  
 $$
d_{sd}(S_1, S_2) = 2\left| \sqrt { A_2 } - \sqrt {A_1} \right|.
 $$
 
\subsection{Area preserving shape deformation}

To measure the effect of global changes in shape on the distance $d_{sd}$ between two surfaces of equal area, we 
ran a computation that measured the distance between surfaces in a family of ellipsoids
from the unit sphere in $\RR^3$.  Two of the principal axes of each ellipse are held fixed at radius one, 
while the third is varied from 1/100 to 10.
The areas of all surfaces are then normalized to one by appropriately rescaling the surfaces,
and the minimal symmetric distortion energy is then computed.
The results are indicated in Figure~\ref{fig:ellipse}, where the distance of each ellipsoid from the unit sphere is given as a function of the  length of the third axis.  Note that the distance increases linearly near the point where both surfaces are unit spheres, indicating that $d_{sd}$  has the ability to differentiate small changes in shape when the two surfaces are close to isometric.  This feature is highly desirable for the use of $d_{sd}$ as a  tool for classifying surfaces, as it shows that near-similar surfaces can be differentiated.  In contrast, the sphericity, a common measure of similarity to a round sphere that compares the isoperimetric ratio of a surface to that of a sphere,
 is  insensitive to small changes in shape near an isometry, as shown in  Figure~\ref{fig:ellipse}.
 
A limitation of the current implementation of our computation of  the $d_{sd}$-distance is visible in this experiment.  
Our method of discrete approximation involves measuring the effects of stretching edges of a mesh, 
and this leads to maps that try to avoid sending edges of the mesh far out into spikes or protrusions.
These issues occur in ellipsoids with one principal axis stretched by a factor close to 10, as shown in the graph of
 Figure~\ref{fig:ellipse} (C).

\begin{figure}[htbp] 
\centering
\includegraphics[width=5in]{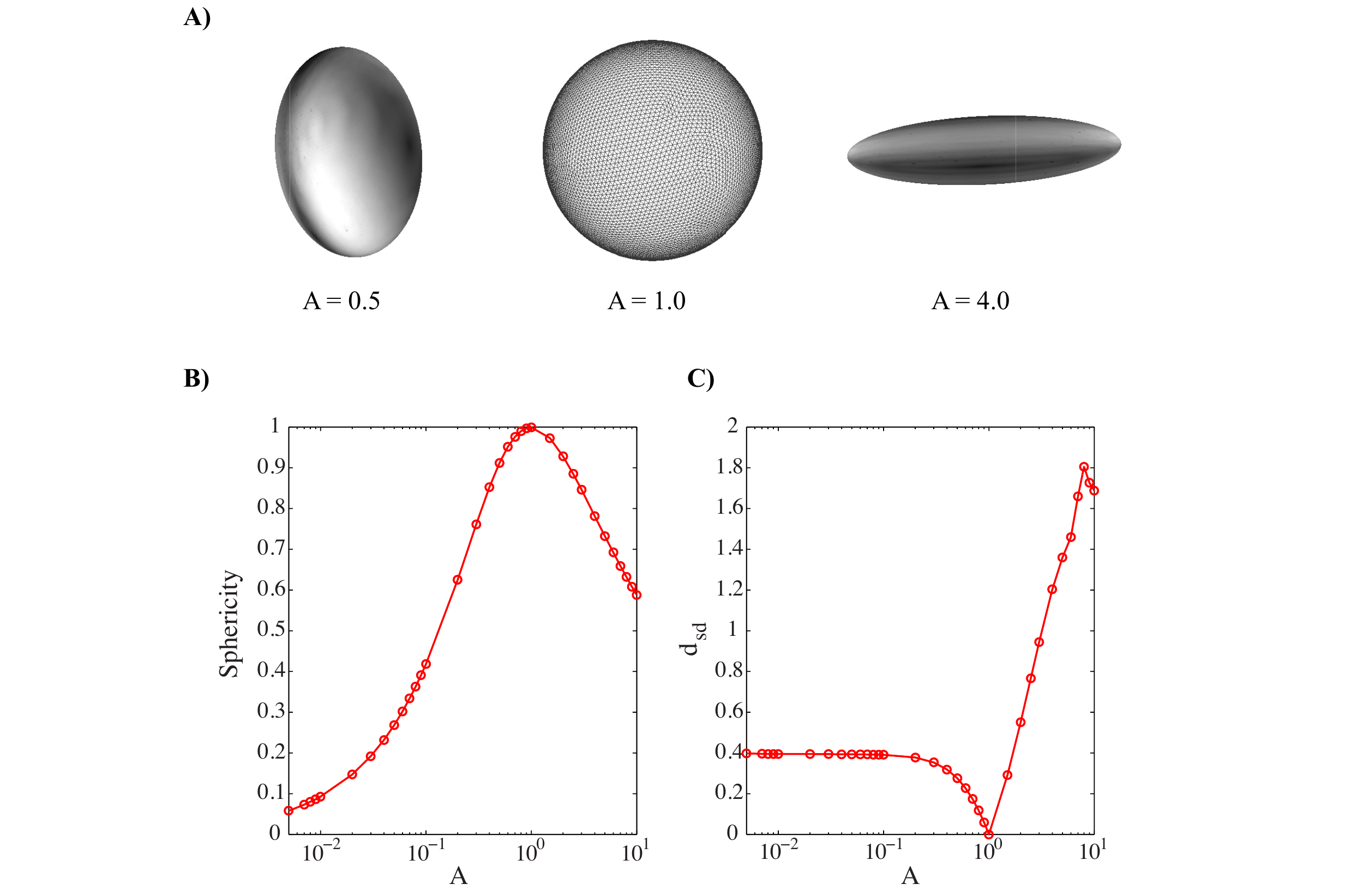} 
\caption{The effect of global shape on the distance between surfaces is indicated by deforming one axis of an ellipsoid and computing the distance to a unit sphere. All surfaces are  scaled to have area one, so it is only shape differences that are being measured. For ellipses that are close to the round sphere to which they are being compared, the $d_{sd}$ distance shown in (C) is more sensitive to small changes than the sphericity shown in (B). Computational issues arise for
the meshes used here when  $A$ becomes close to 10, as seen in (C).}
\label{fig:ellipse}
\end{figure}

\subsection{The effect of noise}

In reconstructing surfaces from scanned data, one often encounters errors in the location of vertices on a surface.
These variations of vertex positions are local in nature and do not affect the overall shape of a surface, but can
cause crinkling and spiking effects locally.
To measure the effect on such noise related local deformations of a surface, we added 
Gaussian noise to the surface of a sphere and measured the $d_{sd}$ distance of the resulting surface from a round sphere. 
The  mesh used to represent the unit sphere had mean length $0.008$ and we added Gaussian random radial noise to each vertex, with standard deviation equal to a multiple $N$ of the average mesh edge length. The results, shown in Figure ~\ref{fig:noise} are extremely promising. They indicate that a random perturbation whose standard deviation is between zero and  the average edge length of the mesh
is recognized by $d_{sd}$  as being close to a round sphere.
     
\begin{figure}[htbp] 
\centering
\includegraphics[width=5in]{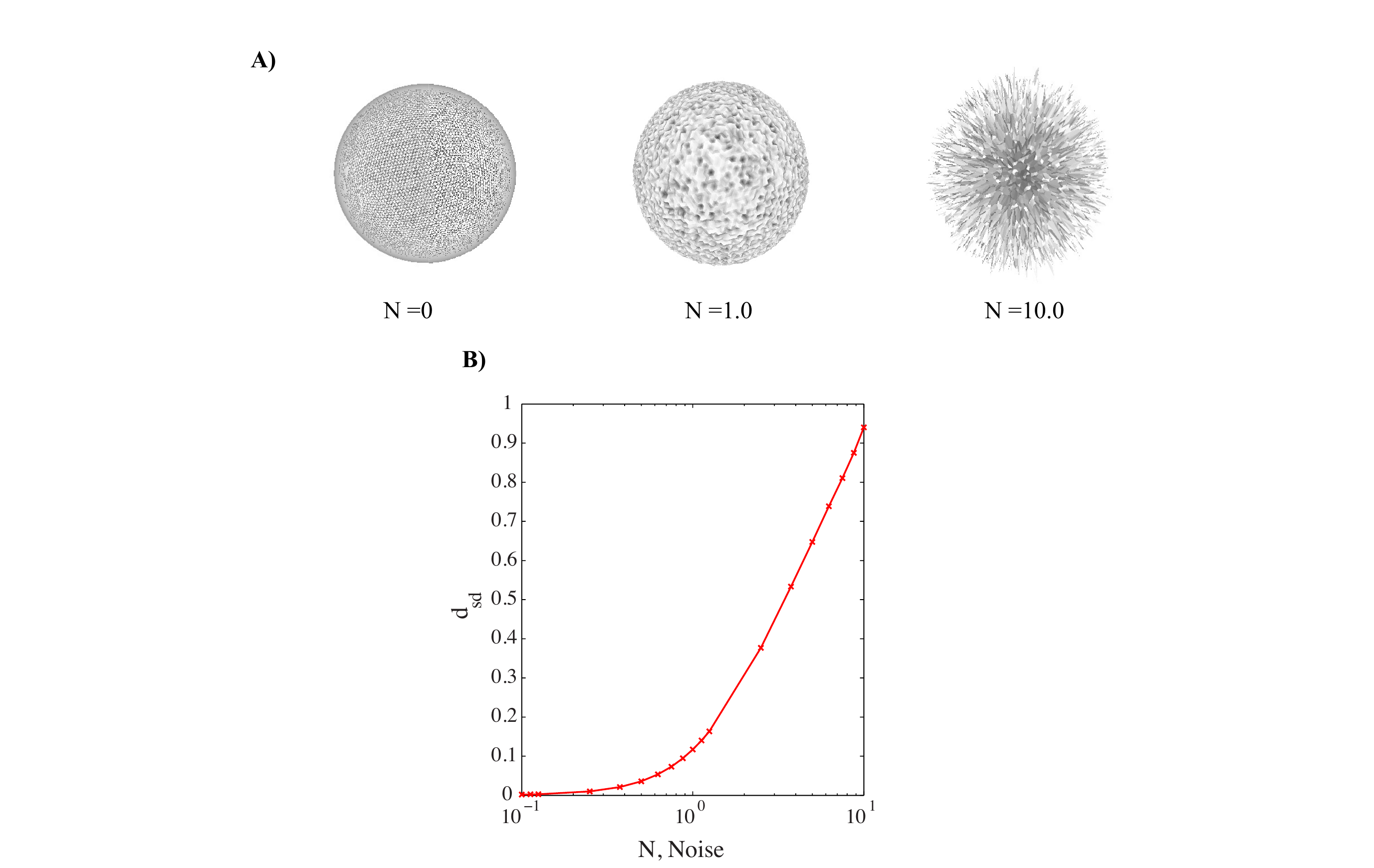} 
\caption{The effect of noise, or local deformation of the geometry, on the distance between two surfaces is indicated by adding Gaussian random noise to the vertices of a unit sphere.  Again all surfaces are scaled to have area one. $N$ indicates the standard deviation of the Gaussian as a multiple of the mean edge length, equal to 0.008 in this example.}
\label{fig:noise}
\end{figure}

\subsection{Subdividing a mesh}
    
\begin{figure}[htbp] 
\centering
\includegraphics[width=2in]{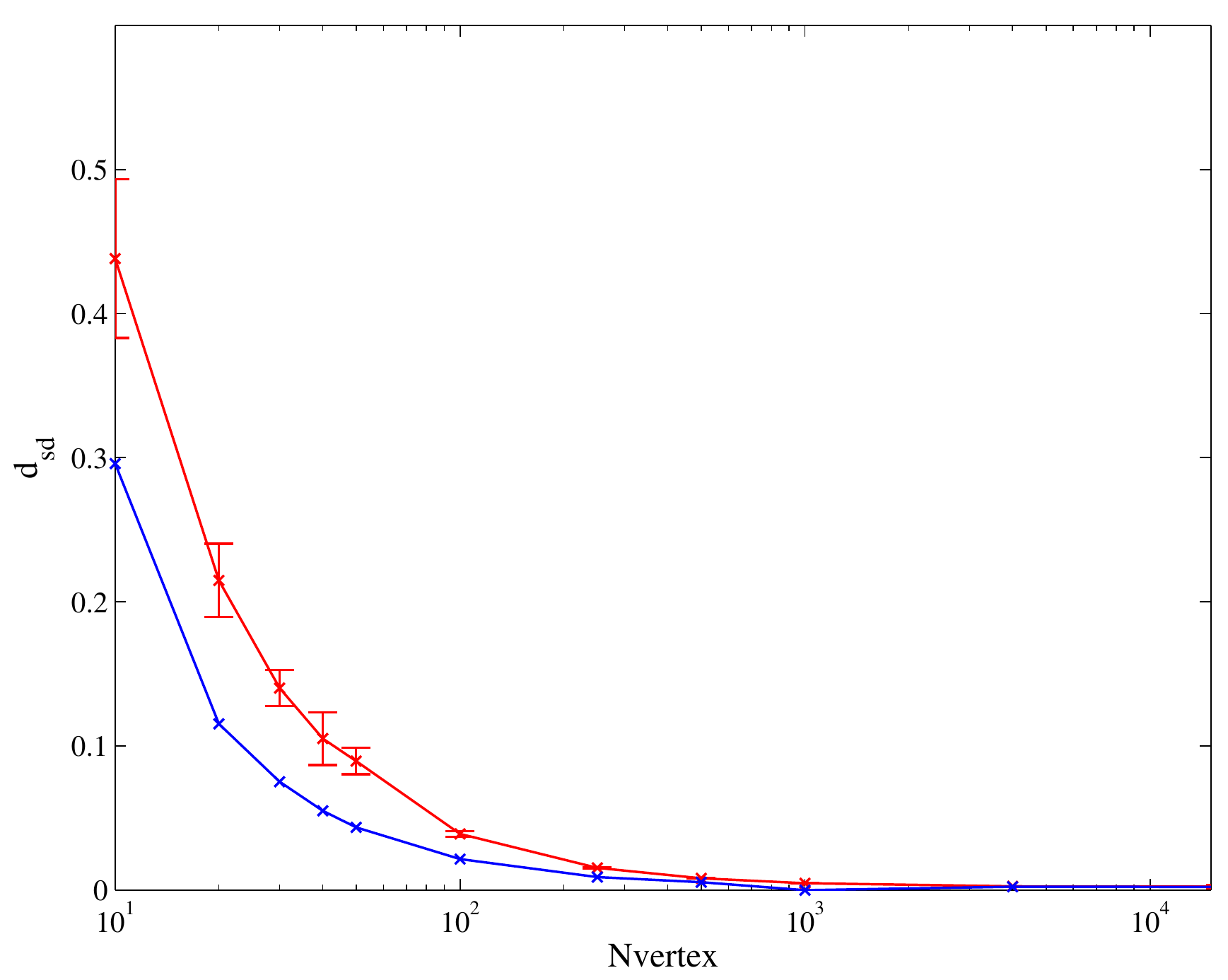} 
\caption{Distances of spheres represented with varying numbers of mesh points from the unit sphere with 1000 uniformly distributed vertices. The upper curve gives the $d_{sd}$ distance as a function of the numbers of  randomly distributed vertices. The lower curve gives the $d_{sd}$ distance as a function of the number of uniformly distributed points. Note that as the number of uniformly distributed points drops from 10,000 to 1,000, the $d_{sd}$-distance remains near zero, indicating
that a change in mesh does not affect the distance when there are enough points to accurately model the surface.  For low number of vertices, the graphs reflect a larger deviation from roundness of the surfaces represented by the mesh.}
\label{fig:vertices}
\end{figure}

To understand the effect of the choice of mesh on $d_{sd}$, we experimented with the effects of
 simplifying a mesh by removing points.
We take for our first surface $F_1$ a sphere $S_1$ whose surface is discretized with 1000 points, distributed uniformly on the surface.
We compare $F_1$ with a series of spheres having varying numbers of mesh points. All surfaces are scaled to have area one

In  Experiment 1 a second sphere is represented with  $N$ vertex points on its surface that are placed randomly, for values of $N$  up to $N =1000$.  Larger values of $N$, up to $N= 16000$, are obtained by subdividing each of the 1000 triangles into either four or 16 similar triangles.
The experiment  is repeated 50 times for each value of $N$ and  the average and standard deviation of $d_{sd}$ is obtained for these 50 samples.
The number of vertices $N$ varies from 10 to 16000, indicated by the upper plot in Figure~\ref{fig:vertices}.
In Experiment 2 the second sphere is again represented with $N$ points on its surface with $N$ between 10 and 16000, but this time the positions of the points are optimized to give a distribution that is as uniform as possible. The resulting distances are shown by the lower plot on the Figure~\ref{fig:vertices}.
The results in both cases indicate that a change of mesh does not affect the $d_{sd}$-distance as long as enough vertices 
are kept to maintain a close approximation of the underlying geometrical surface.  For uniformly distributed points, the number of points required to densely approximate the surface of the sphere is smaller than for randomly distributed points, causing a gap between the two graphs. The location of the vertices of the meshes has no effect once there are enough to
accurately capture the geometry of the round sphere.

\subsection{Chirality and Reflections}

The $d_{sd}$-distance measures the  symmetric distortion energy of an orientation preserving
diffeomorphism.  This distance can be reduced significantly 
if we also allow orientation reversing diffeomorphisms. For example, comparing a right hand and mirror-image left hand with $d_{sd}$  will give a non-zero distance.  There are circumstances when we want to ignore this difference
in orientation, or chirality.

We can specify that we wish to incorporate into our shape analysis either only orientation preserving diffeomorphisms, or alternately both orientation preserving and orientation reversing 
diffeomorphisms.  To allow for orientation reversing correspondences, when comparing a
surface $F_1$ to $F_2$, we  add an additional surface $\bar F_1$ which we also compare to $F_2$.  The surface 
 $\bar F_1$ is obtained by reflecting  $F_1$, computed by multiplying the $z$-coordinate of each vertex  of $F_1$ by $-1$.
This gives twice as many candidates for an $E_{sd}$ minimizing map, and may lead to a smaller distance.
We denote the distance of two surfaces given by minimizing in this larger class of 
potential correspondences by $\bar d_{sd}$, so that 
$\bar d_{sd}(F_1,F_2)  = \min \{ d_{sd}(F_1,F_2)  , d_{sd}(\bar F_1,F_2)  \} $.

To see the effect of adding orientation reversing diffeomorphisms, 
we model a right hand by a surface $F_1 $ which is a sphere with three protrusions, in the 
direction of the $ \vec i , \vec j $ and $ \vec k$ vectors.  For $F_2$ we take a sequence of surfaces where
the protrusion in the direction of   $\vec j $ is rotated  in the $xy$-plane through  $- \vec i $ to  $- \vec j $.
Its final position represents a surface isometric to the reflection of $F_1$.  We compute the distances
$d_{sd}$ from $F_1$ to each surface in this family, and then the distances
$\bar d_{sd}$ from $F_1$ which allow for orientation reversal. The results are shown in Figure~\ref{threebumps}.
All surfaces are scaled to have area one.

\begin{figure}[htbp] 
\centering
\includegraphics[width=4in]{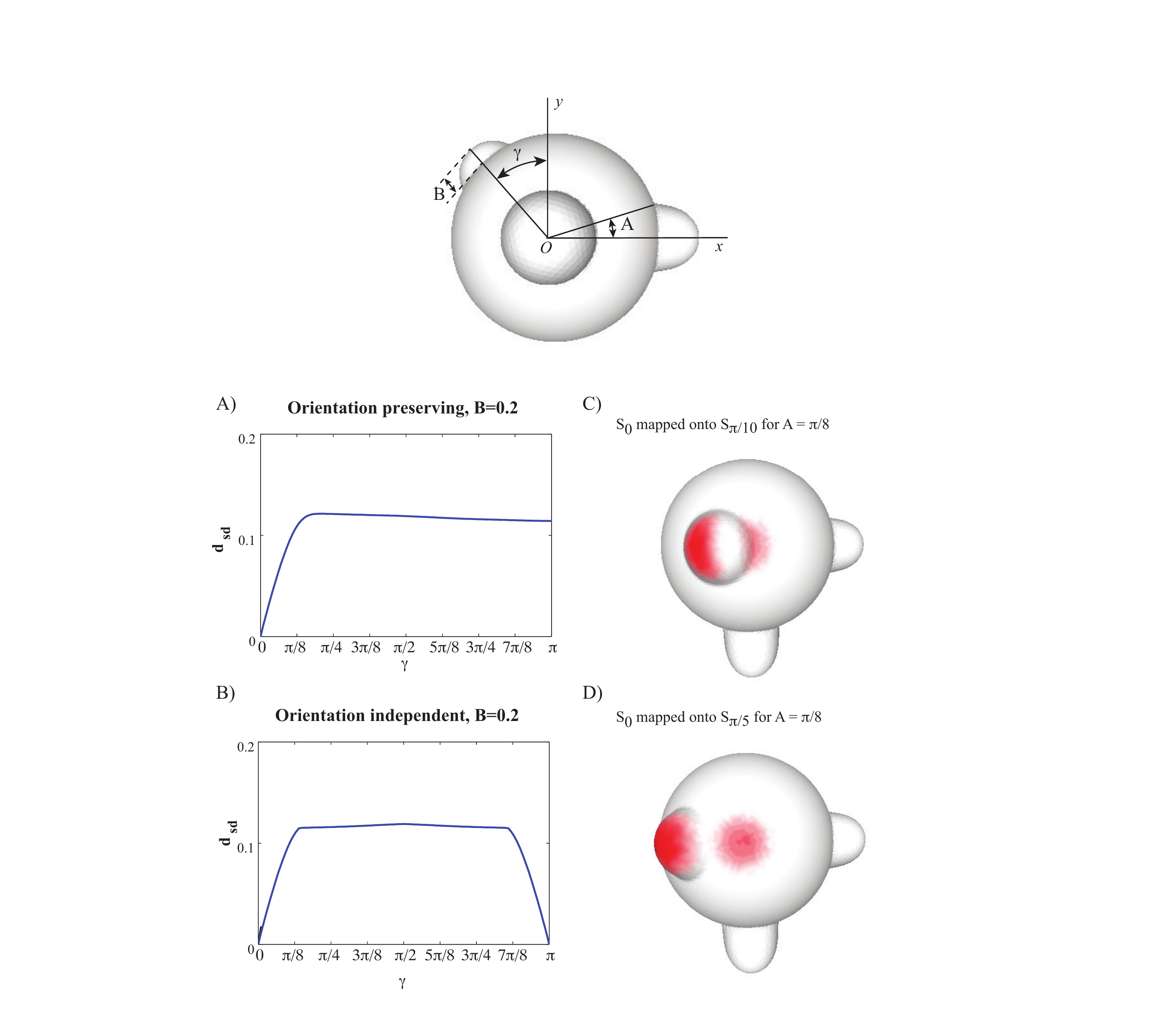} 
\caption{Distances of spheres with three protruding bumps of heights 1.2 (along the positive $y$-axis), 1.4 (along  the positive $x$-axis), and 1.6 (along  the positive $z$-axis).
The bump of height 1.2, initially facing straight out along the $y$-axis, is rotated through the
negative $x$-axis around to the negative $y$-axis. In (A) the resulting $d_{sd}$-distances are graphed with only orientation preserving diffeomorphisms allowed. The 
$d_{sd}$-distance increases initially, but then drops slightly as the bump keeps rotating to the opposite side of the sphere.
In (B) orientation reversing diffeomorphisms are also allowed, and the $\bar d_{sd}$-distance drops down to 0 after a rotation of $\pi$.
In (C) and (D), shaded red areas indicate areas of larger stretching or compression of the domain, shown on the image surface.
}
\label{threebumps}
\end{figure}

An interesting example of this phenomenon occurred in an analysis of  a collection of  teeth taken from a
variety of primates, both simians and prosimians. The high effectiveness of $d_{sd} $ in measuring similarities and differences 
between such biological shapes is described in \cite{KoehlHassScience}.
A typical set of $d_{sd}$ distances between teeth from the same and from  different families,
 is shown in Figure~\ref{examples}.
  
 \begin{figure}[htbp] 
\centering
\includegraphics[width=3in]{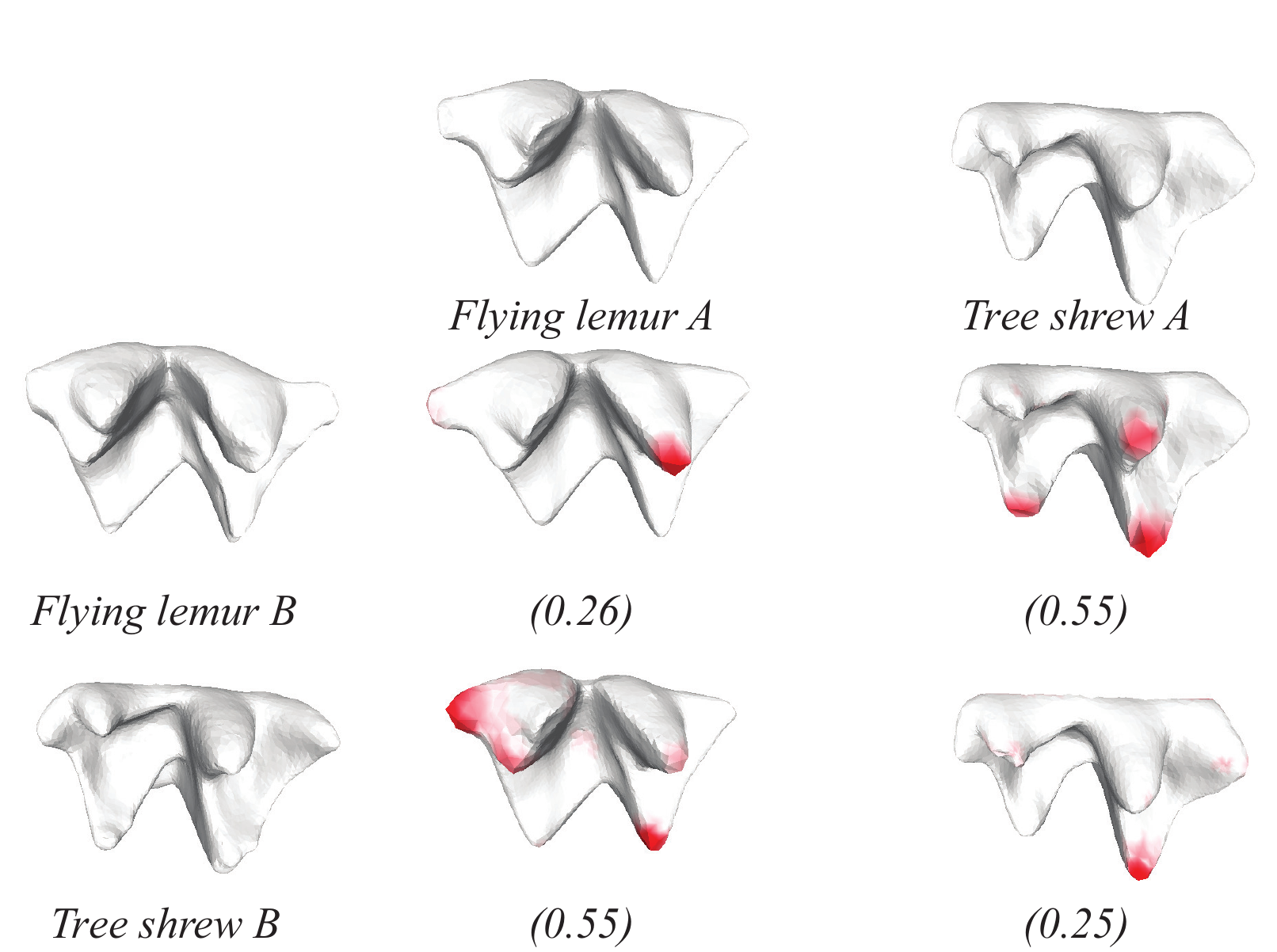} 
\caption{A computation of $d_{sd}$-distances between four teeth.  Two of the teeth are from flying lemurs and two are from tree shrews. All surfaces are scaled to have area one. The distances between teeth from the same families are lower than between teeth of different families.  The shaded red areas indicate areas where there is large stretching or compression on the domain (surface A), indicated on the image (surface B). Note that distances between pairs of teeth from the same family are smaller.}
\label{examples}
\end{figure}

Data describing the geometry of a collection of teeth was obtained from the study of
 \cite{Boyer}  and we are grateful to Y. Lipman for making it available to us.
The data contained both left and right teeth. 
 The distance between two teeth can be highly affected by the choice of whether to
 allowing orientation reversing correspondences, as indicated in Figure~\ref{orientedteeth}.
 
 \begin{figure}[htbp] 
\centering
\includegraphics[width=3in]{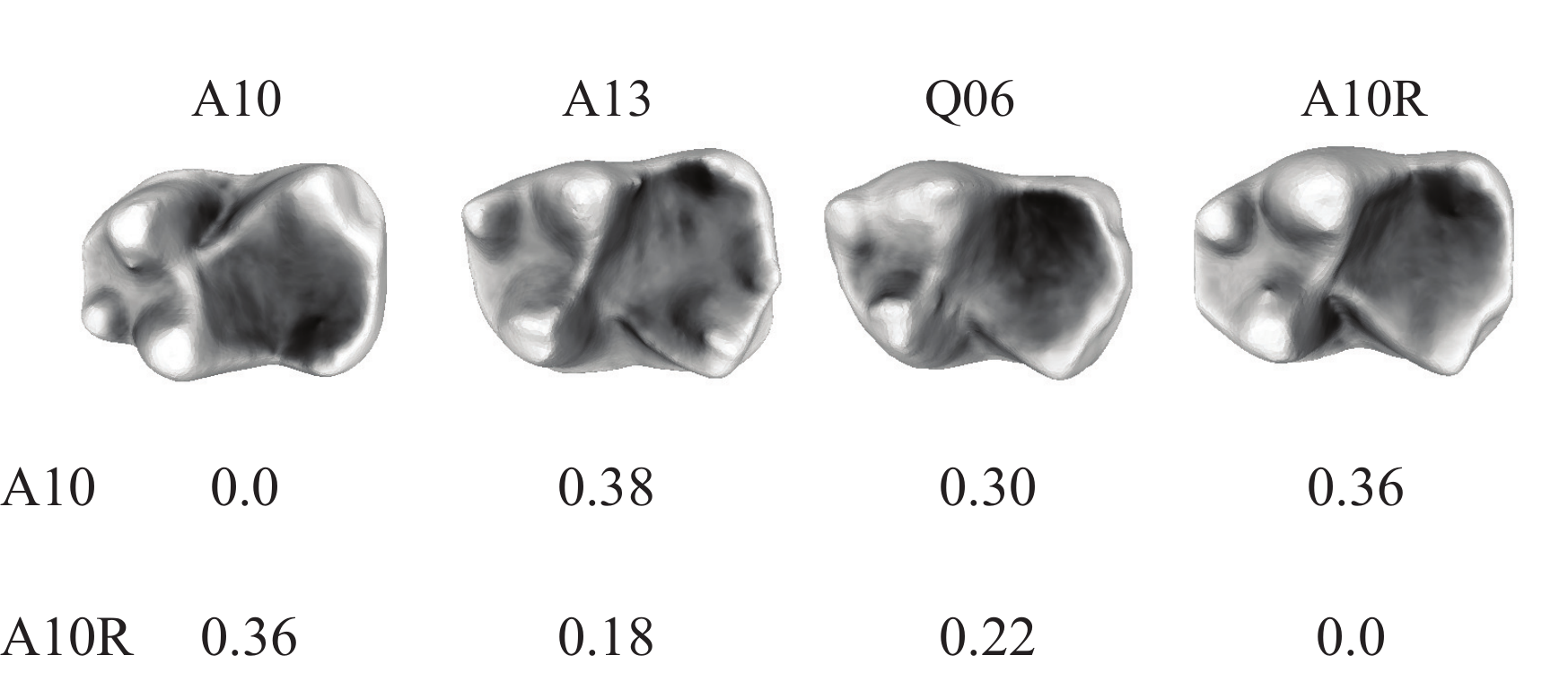} 
\caption{All teeth here belong to  euprimates, with A10 and A13 in one family and Q06 in a second.  
The $d_{sd}$-distance between molar A10 and molar A13 is 0.38.
Despite belonging to different families, the distance between  molar A10 and molar Q06 has the smaller value of 0.30.  This seeming mismatch is resolved by  considering chirality. 
While  A10 and  A13 are from the same family, they have different orientation, or handedness, while A10 and Q06 share the same orientation. 
The reflected tooth A10R has smaller $d_{sd}$-distance to A13 (0.18)  then to Q06 (0.22), indicating that $d_{sd}$-distance is capturing information
about the family to which the tooth belongs.}
\label{orientedteeth}
\end{figure}
 
When only orientable alignments were allowed, the $d_{sd}$-distance was not as effective
as either a human observer or as the continuous Procrustes distance described in  \cite{Boyer}  
at discriminating between the teeth of simians and prosimians.
The effect of allowing both orientation preserving and reversing maps is seen in the ROC analysis  in Figure ~\ref{teeth}.
In this statistical test, the effectiveness of a distance at predicting membership in a common family is given by
the area under a curve, with greater area indicating higher effectiveness. The $d_{sd}$-distance measured only
with orientation preserving alignments was not as effective as other methods at correctly identifying teeth from
the same family (dashed red curve in Figure ~\ref{teeth}).  
This occurred because both left and right molars were included in the data set. When
orientation reversing diffeomorphisms were also allowed, the $\bar d_{sd}$-distance  performed as well as the
other methods (solid red curve in Figure ~\ref{teeth}).
The results indicate that geometric differences between left and right molars within the same family 
can be larger than those between right molars from two different families.

\begin{figure}[htbp] 
\centering
\includegraphics[width=3in]{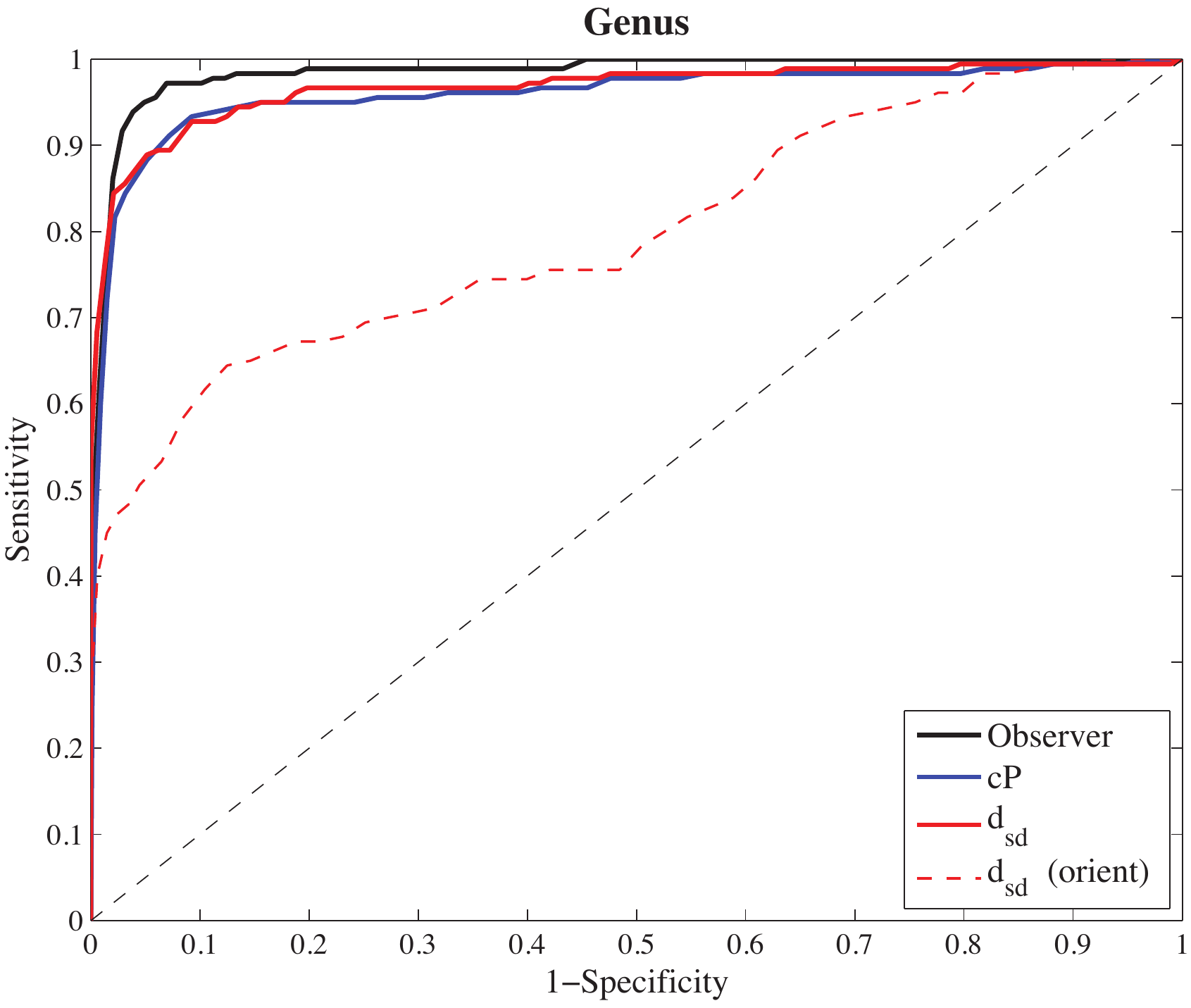} 
\caption{A statistical test of the effectiveness of a distance in identifying members of two subgroups is given by the area under a curve in a ROC analysis. Greater area indicates more effectiveness. The solid red curve results from $\bar d_{sd},$ with orientation reversing diffeomorphisms allowed in comparing shapes, while the dashed red curve restricts the computation of $d_{sd},$ to orientation preserving maps. The dashed curve resulted because $d_{sd}$  distinguished  left and right molars from the same family
that were highly similar after reflection.}
\label{teeth}
\end{figure}

\section{Conclusions}
We have described a new method of comparing the shapes of two Riemannian  surfaces
of genus zero. 
We introduced the notion of symmetric distortion energy
and established the existence of a conformal diffeomorphism between any
pair of genus-zero surfaces  that 
minimizes this energy among all conformal maps. We then established that the value of the
symmetric distortion energy on the minimizing map leads to a metric on the
space of shapes. We described how to implement this method and there results
of experiments performed  with such an implementation.
These experiments indicate that the symmetric distortion energy has
properties that are highly desirable for many classes of applications.

 \end{document}